\newtheorem{theorem}{Theorem}[section]
\newtheorem{proposition}[theorem]{Proposition}
\newtheorem{lemma}[theorem]{Lemma}
\newtheorem{corollary}[theorem]{Corollary}
\newtheorem{definition}[theorem]{Definition}
\newcommand{\D}{\mathcal{D}}
\newcommand{\C}{\mathcal{C}}
\newcommand{\DT}{\mathcal{D}_T}
\newcommand{\CT}{\mathcal{C}_T}
\newcommand{\DL}{\mathcal{D}_L}
\newcommand{\CL}{\mathcal{C}_L}
\newcommand{\id}{\mathrm{id}}
\newcommand{\Manoa}{M\=anoa}
\newcommand{\Hawaii}{Hawai\kern.05em`\kern.05em\relax i}
\begin{document}
\message{TIME: \pdfelapsedtime^^J}
\title[Localization $C^{*}$-algebras and $K$-theoretic duality]{Localization $C^{*}$-algebras and $K$-theoretic duality }
\author{Marius Dadarlat}\address{MD: Department of Mathematics, Purdue University, West Lafayette, IN 47907, USA}\email{mdd@purdue.edu}
\author{Rufus Willett}\address{RW:Department of Mathematics, University of \Hawaii~at \Manoa,  2565 McCarthy Mall, Keller 401A
Honolulu
HI 96822
USA}\email{rufus@math.hawaii.edu}
\author{Jianchao Wu}\address{JW: Department of Mathematics, Penn State University, 109 McAllister Building, University Park, PA 16802}\email{jianchao.wu@psu.edu}
\thanks{M.D. was partially supported by NSF grants \#DMS--1362824 and \#DMS--1700086.  R.W. was partially supported by NSF grants \#DMS--1401126 and \#DMS--1564281. J.W. was partially supported by SFB 878 {\em Groups, Geometry and Actions} and ERC grant no.\ 267079.}	

\begin{abstract}
 Based on the localization algebras of Yu, and their subsequent analysis by Qiao and Roe, we give a new picture of $KK$-theory in terms of time-parametrized families of (locally) compact operators that asymptotically commute with appropriate representations.
 \end{abstract}

\subjclass[2010]{19K35, 19K56,  	46L80}

\maketitle
\section{introduction}
Let $A$ be a unital $C^*$-algebra, unitally represented on a Hilbert space $H$.  Assume that there is a continuous family $(q_t)_{t\in [0,\infty)}$ of compact projections on $H$ that asymptotically commutes with $A$, meaning that $[q_t,a]\to 0$ as $t\to\infty$ for all $a\in A$.  Note that if $p$ is a projection in $A$, then the family $t\mapsto p q_t$ of compact operators gets close to being a projection, and is thus close to a projection that is uniquely defined up to homotopy; in particular, there is a well-defined $K$-theory class $[p q_t]\in K_0(K(H))=\mathbb{Z}$.  It is moreover not difficult to see that this idea can be bootstrapped up to define a homomorphism
\begin{equation}\label{eq:pair}
[q_t]:K_0(A)\to \mathbb{Z}, \quad [p]\mapsto [pq_t].
\end{equation}
This suggests using such parametrized families $(q_t)_{t\in [0,\infty)}$ to define elements of $K$-homology.

Indeed, something like this has been done when $A=C(X)$ is commutative. In this case, the condition that $[q_t,a]\to 0$ is equivalent to the condition that the `propagation' of $q_t$ (in the sense of Roe, \cite[Ch. 6]{HigRoe:khomology}) tends to zero, up to an arbitrarily good approximation.  Motivated by considerations like the above, and by the heat kernel approach to the Atiyah-Singer index theorem, Yu \cite{Yu:localization-algebra} described $K$-homology for simplicial complexes in terms of families with asymptotically vanishing propagation using his localization algebras.  Subsequently, Qiao and Roe \cite{Qiao-Roe} gave a new approach to this result of Yu that works for all compact (in fact, all proper) metric spaces.

In this paper, we present a new picture of Kasparov's $KK$ groups based on asymptotically commuting families.  Thanks to the relationship between asymptotically vanishing propagation and asymptotic commutation, our picture can be thought of as an extension of the results of Yu and Qiao-Roe from commutative to general (separable) $C^*$-algebras, and from $K$-homology to $KK$-theory.   We think this gives an attractive picture of $KK$-theory.  We also suspect that the ease with which the pairing in line \eqref{eq:pair} is defined \textemdash~ note that unlike in the case of Paschke duality, there is no dimension shift, and unlike in the case of $E$-theory, there is no suspension \textemdash~ should be useful for future applications.  Having said this, we should note that the picture of the pairing in line \eqref{eq:pair} is overly simplified, as in general to get the whole $KK$ group one needs to consider formal differences of such families of projections $(q_t)$ in an appropriate sense.

We now give precise statements of our main results.   For a $C^{*}$-algebra $B$, we denote by $C_u(T,B)$ the $C^{*}$-algebra of bounded and uniformly continuous functions from $T=[0,\infty)$ to $B$.
Inspired by work of Yu ~\cite{Yu:localization-algebra} and Qiao and Roe \cite{Qiao-Roe}, we define the localization algebra
 $\CL(\pi)$ associated to a representation $\pi$ of a separable $C^{*}$-algebra $A$ on a separable Hilbert space to be the $C^{*}$-subalgebra of $C_u(T,L(H))$ consisting
of all the functions $f$ such that for all $a\in A$,
\[ [f,\pi(a)]\in C_0(T,K(H))\,\, \text{and} \, \,\pi(a)f\in C_u(T,K(H)).\]
Let us recall that a representation $\pi$ is ample if it is nondegenerate, faithful and $\pi(A)\cap K(H)=\{0\}$.
One verifies that the isomorphism class of $\CL(\pi)$ does not depend on the choice of an ample representation $\pi$. In this case, we write $\C_L(A)$ in place of $\CL(\pi)$ and view $A$ as a $C^{*}$-subalgebra of $L(H)$. Note that if $A$ is unital, then
\[\C_L(A)=\{f\in C_u(T,K(H))\colon [f,a]\in C_0(T,K(H)),\,\forall a\in A\}.\]

In this paper we establish canonical isomorphisms $K^i(A) \cong K_i(\C_L(A)) $, $i=0,1$, between the $K$-homology of $A$ and the $K$-theory of
the localization algebra $\C_L(A)$.
More generally, we use results of Thomsen \cite{Thomsen} to show that for separable $C^{*}$-algebras $A$, $B$
and any absorbing representation $\pi:A \to L(H_B)$ on the standard infinite dimensional countably generated right Hilbert $B$-module $H_B$, there are canonical isomorphisms of groups
\begin{equation}\label{eq:kk iso}
\xymatrix{ KK_i(A,B) \ar[r]^-\cong & K_i(\CL(\pi)),\quad i=0,1,}
\end{equation}
where  the localization $C^{*}$-algebra $\CL(\pi)$ consists of those functions $f\in C_u(T,L(H_B))$ such that for all $a\in A$,
\[ [f,\pi(a)]\in C_0(T,K(H_B))\,\, \text{and} \, \,\pi(a)f\in C_u(T,K(H_B)).\]
The isomorphism in line \eqref{eq:kk iso} is defined and proved by combining Paschke duality with a generalization of the techniques used by Roe and Qiao in the commutative case. \\

The paper is structured as follows.  In Section~\ref{sec:abs}, we discuss absorbing representations and give a version of Voiculescu's theorem appropriate to localization algebras.  In Section~\ref{sec:1}, we define the various dual algebras and localization algebras that we use, and show that they do not depend on the choice of absorbing representation.  In Section~\ref{sec:dual}, we prove the isomorphism in line \eqref{eq:kk iso}.  Finally, in Section~\ref{sec:inv}, we construct maps $K_i(\CL(\pi))\to E_i(A,B)$ and show that they `invert' the isomorphism in line \eqref{eq:kk iso} in the sense that the composition $KK_i(A,B)\to K_i(\CL(\pi)) \to E_i(A,B)$ is the canonical natural transformation from $KK$-theory to $E$-theory.

\paragraph{\textbf{Acknowledgements:}} Part of this research was conducted during the authors' visits to the University of M\"{u}nster, the University of \Hawaii~at \Manoa, and the Institut Mittag-Leffler. We are grateful for the hospitality of the host institutes.  We would also like to thank the referee for a close reading of the paper, and several useful suggestions.

\section{Absorbing representations}\label{sec:abs}
Let $A$, $B$ be separable $C^{*}$-algebras.
If $E, F$ are countably generated right Hilbert $B$-modules, we denote by $L(E,F)$ the $C^{*}$-algebra of  bounded $B$-linear adjointable  operators from $E$ to $F$. The corresponding $C^{*}$-algebra of ``compact" operators is denoted by $K(E,F)$, \cite{Kas:cp}.
Set $L(E)=L(E,E)$ and $K(E)=K(E,E)$.
Recall that $H_B$ is the standard infinite dimensional countably generated right Hilbert $B$-module.

We shall use the notion of (unitally) absorbing $*$-representations $\pi:A \to L(H_B)$, see \cite{Thomsen}.

\begin{definition}\label{def:absorbing}
(i) Suppose that $A$ is a unital separable $C^{*}$-algebra. A unital representation $\pi:A \to L(H_B)$ is called \emph{unitally absorbing} for the pair $(A,B)$ if  for any other unital representation
$\sigma:A \to L(E)$, there is an isometry $v\in C_b(\mathbb{N}, L(E,H_B))$ such that
$v\sigma(a)-\pi(a)v\in C_0(\mathbb{N}, K (E,H_B))$ for all $a\in A$.

(ii) Suppose that $A$ is a separable $C^{*}$-algebra. We denote by $\widetilde{A}$ the unitalization of $A$, with the convention that
$\widetilde{A}=A$, if $A$ is already unital.
A  representation $\pi:A \to L(H_B)$ is called
\emph{absorbing} for the pair $(A,B)$ if its unitalization $\widetilde{\pi}:\widetilde{A} \to L(H_B)$
is unitally absorbing  for the pair $(\widetilde{A},B)$.
\end{definition}
Note that in Definition~\ref{def:absorbing},
if we denote the components of $v$ by $v_n$, we have
$v_n\sigma(a)-\pi(a)v_n\in K (E,H_B)$ and $\lim_{n\to \infty} \|v_n\sigma(a)-\pi(a)v_n\|=0$ for all $a\in A$.
\begin{theorem}[Voiculescu, \cite{Voi:Weyl-vn}]\label{thm:Voiculescu-classic}
Any ample representation of a separable $C^{*}$-algebra on a separable infinite dimensional Hilbert space is absorbing.
\end{theorem}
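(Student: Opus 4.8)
The plan is to reduce to the classical situation $B=\mathbb{C}$ and run the standard Weyl--von Neumann--Voiculescu argument in the form due to Arveson, built around quasicentral approximate units and Voiculescu's ``fitting lemma''. First I would pass to the unital case: replacing $A$ by $\widetilde A$ and $\pi$ by $\widetilde\pi$, one checks (using only that $1_H\notin K(H)$ since $H$ is infinite dimensional) that $\widetilde\pi$ is again unital, faithful, and satisfies $\widetilde\pi(\widetilde A)\cap K(H)=\{0\}$. So it is enough to prove: if $A$ is unital and separable, $\pi:A\to L(H)$ is a unital faithful representation with $\pi(A)\cap K(H)=\{0\}$ and $H$ is separable and infinite dimensional, then for every unital representation $\sigma:A\to L(E)$ (which we may take on a separable $E$) there is a sequence of isometries $v_n\in L(E,H)$ with $v_n\sigma(a)-\pi(a)v_n\in K(E,H)$ for all $a$ and $n$ and $\|v_n\sigma(a)-\pi(a)v_n\|\to 0$ for all $a\in A$; assembling the $v_n$ into an element of $C_b(\mathbb{N},L(E,H))$ then gives exactly Definition~\ref{def:absorbing}.

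The key technical input is the \emph{fitting lemma}: given a representation $\rho:A\to L(F)$ on a \emph{finite-dimensional} Hilbert space $F$, a finite set $S\subseteq A$, $\ep>0$, and a finite-rank projection $p\in L(H)$, there is an isometry $w:F\to H$ with $wF\perp pH$ and $\|w\rho(a)-\pi(a)w\|<\ep$ for $a\in S$. I would deduce this from Glimm's lemma: since $\pi(A)\cap K(H)=\{0\}$ implies $M_m(\pi(A))\cap K(H\otimes\mathbb{C}^m)=\{0\}$, every state of $M_m(\pi(A))$ --- in particular the one encoding the matrix $\big(\langle e_i,\rho(a)e_j\rangle\big)_{i,j}$ for an orthonormal basis $(e_i)$ of $F$ --- is a weak-$*$ limit of vector states $\langle\zeta_k,\,\cdot\,\zeta_k\rangle$ with $\zeta_k\rightharpoonup 0$ in $H\otimes\mathbb{C}^m$. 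Weak convergence to $0$ lets one also push the relevant unit vectors $\xi_1,\dots,\xi_n\in H$ ($n=\dim F$) to be almost orthonormal and almost orthogonal to $pH$ with $\langle\xi_i,\pi(a)\xi_j\rangle\approx\langle e_i,\rho(a)e_j\rangle$; a Gram--Schmidt correction then turns $e_j\mapsto\xi_j$ into an honest isometry $w$ with the stated estimate. This is the only place the hypothesis $\pi(A)\cap K(H)=\{0\}$ is used.

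The main construction then goes as follows. Using that $K(H)$ contains an approximate unit that is quasicentral for $\pi(A)$, choose an increasing sequence $0=p_0\le p_1\le p_2\le\cdots$ of finite-rank projections with $p_n\uparrow 1_H$ strongly and $\|[p_n,\pi(a)]\|$ as small as we like for $a$ in a fixed countable dense subset of $A$; do the analogous thing on the $E$-side to get $0=q_0\le q_1\le\cdots\uparrow 1_E$ with $\|[q_n,\sigma(a)]\|$ small, so that $\sigma$ is, up to arbitrarily small norm errors, block diagonal for the decomposition $E=\bigoplus_n(q_n-q_{n-1})E$ into finite-dimensional pieces. Apply the fitting lemma to (a genuine representation approximating) each finite-dimensional block $(q_n-q_{n-1})\sigma(\cdot)(q_n-q_{n-1})$, with a rapidly decreasing tolerance $\ep_n$ and with the ``forbidden'' projection taken to be the finite-rank projection onto the span of the ranges of the isometries $w_0,\dots,w_{n-1}$ constructed so far. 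This produces isometries $w_n$ with mutually orthogonal ranges, and $v:=\bigoplus_n w_n$ is an isometry $E\to H$; using both quasicentral approximate units to control the cross terms $w_m^*\pi(a)w_n$ ($m\ne n$) one checks that $\|v\sigma(a)-\pi(a)v\|$ is small and that $v\sigma(a)-\pi(a)v$ is compact. Driving $\ep_n\to 0$ faster over successive runs of the construction yields the required sequence $v_n$.

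I expect the main obstacle to be the interlocking of the two quasicentral approximate units with the fitting lemma in the last step --- and in particular arranging that $v\sigma(a)-\pi(a)v$ is genuinely \emph{compact} rather than merely small in norm. The point is that one must ensure $\pi(a)v$ is, modulo an arbitrarily small finite-rank error, supported on finitely many of the blocks $p_nH$; this is precisely what the quasicentrality of $(p_n)$ for $\pi(A)$ provides, and coordinating it with the block decomposition on the $E$-side and with the choices of $\ep_n$ and of the forbidden projections is the bookkeeping-heavy heart of the argument. Separability of $A$ and of $H$ enters only to make this countable inductive construction possible.
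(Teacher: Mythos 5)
The paper offers no proof of this statement (Theorem~\ref{thm:Voiculescu-classic} is simply cited from \cite{Voi:Weyl-vn}), so your sketch can only be measured against the classical argument, whose skeleton you do reproduce: unitalization, Glimm's lemma, a quasicentral approximate unit chopping $E$ into finite-dimensional blocks, and a block-diagonal assembly of isometries with mutually orthogonal ranges avoiding a forbidden finite-rank projection. However, one step fails as written. You state the fitting lemma for genuine finite-dimensional \emph{representations} $\rho$, with the intertwining estimate $\|w\rho(a)-\pi(a)w\|<\varepsilon$, and then propose to apply it to ``a genuine representation approximating'' each block $(q_n-q_{n-1})\sigma(\cdot)(q_n-q_{n-1})$. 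These compressions are unital completely positive maps, not representations, and in general no finite-dimensional representation approximates them: if $A=\mathcal{O}_2$, or any separable unital $C^*$-algebra with no finite-dimensional representations, no compression of $\sigma$ to a finite-dimensional subspace is even approximately multiplicative on a generating finite set, so the object you want to feed into the fitting lemma does not exist. Nor can the lemma simply be restated for completely positive blocks: the intertwining estimate genuinely requires multiplicativity (your own Gram--Schmidt verification uses $\rho(a^*a)=\rho(a^*)\rho(a)$), and it is false for a non-multiplicative block.

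The standard repair stays inside your toolkit. Prove the lemma in its completely positive, \emph{compression} form: for a unital completely positive $\varphi:A\to L(F)$ with $F$ finite dimensional, a finite set $S$, $\varepsilon>0$ and a finite-rank projection $p$, there is an isometry $w:F\to H$ with $wF\perp pH$ and $\|w^*\pi(a)w-\varphi(a)\|<\varepsilon$ for $a\in S$; the Glimm argument you sketch, via the state on $M_m(A)$ attached to the matrix $\big(\langle e_i,\varphi(a)e_j\rangle\big)_{ij}$, proves exactly this with no multiplicativity. Apply it to the blocks of $\sigma$, enlarging the forbidden projection at stage $n$ so that it also dominates $\pi(a)w_kF_k$ and $\pi(a)^*w_kF_k$ for $a$ in the current finite set and $k<n$ \textemdash~ it is this, not quasicentrality of $(p_n)$, that kills the cross terms $w_m^*\pi(a)w_n$; no quasicentral approximate unit is needed on the $H$-side. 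Quasicentrality of $(q_n)$ for $\sigma(A)$ makes $\sigma(a)$ nearly block diagonal, and assembling $v=\bigoplus_n w_n$ gives $v^*\pi(a)v-\sigma(a)$ compact and of small norm for all $a$. Only at this point use that $\sigma$ is multiplicative: expanding $(v\sigma(a)-\pi(a)v)^*(v\sigma(a)-\pi(a)v)$ and invoking the compression statement at $a$, $a^*$ and $a^*a$ shows this positive operator is compact and small, hence $v\sigma(a)-\pi(a)v$ is compact and small; the upgrade must be performed for the assembled $\sigma$, not block by block. (A minor further point: in the reduction to the unital case, verifying $\widetilde{\pi}(\widetilde{A})\cap K(H)=\{0\}$ needs more than $1_H\notin K(H)$; if $\pi(a)+\lambda 1$ were compact with $\lambda\neq 0$, injectivity of the composition $A\to L(H)\to L(H)/K(H)$ would force $-\lambda^{-1}a$ to be a unit for $A$, which is the actual contradiction in the nonunital case.)
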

\begin{theorem}[Kasparov, \cite{Kas:cp}]\label{thm:Kasparov-abs}
Let $A$ be a unital separable $C^{*}$-algebra and let $B$ be a $\sigma$-unital $C^{*}$-algebra.
If either $A$ or $B$ are nuclear, then any unital ample representation $\pi:A \to L(H)\subset L(H_B)$ is  absorbing for the pair $(A,B)$.
\end{theorem}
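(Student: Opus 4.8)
The plan is to derive this from Kasparov's Hilbert-module analogue of Voiculescu's theorem \cite{Kas:cp}, by the same argument that upgrades Theorem~\ref{thm:Voiculescu-classic} to an absorption statement in the scalar case. First I unwind the hypotheses. A unital ample representation $\pi\colon A\to L(H)\subset L(H_B)$ has the form $\pi=\pi_0\otimes 1_B$ for a unital ample representation $\pi_0\colon A\to L(H)$, once we identify $H_B\cong H\otimes B$. Since $\pi_0$ is faithful with $\pi_0(A)\cap K(H)=\{0\}$, the representation $\pi$ is nondegenerate (as $\pi(1)=1$) and faithful, and $\pi(a)\in K(H_B)$ forces $\pi_0(a)\in K(H)$ --- by slicing $K(H_B)\cong K(H)\otimes B$ against a state of $B$ --- hence $\pi_0(a)=0$, hence $a=0$; moreover $H\cong\bigoplus_{\NNN}H$ gives $\pi\cong\bigoplus_{\NNN}\pi$, so $\pi$ has infinite multiplicity. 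These are exactly the conditions under which Kasparov's theorem applies.

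Now let $\sigma\colon A\to L(E)$ be an arbitrary unital representation, with $E$ countably generated (as it must be for an isometry $v\in C_b(\NNN,L(E,H_B))$ to exist at all). By Kasparov's stabilization theorem fix a unitary $H_B\oplus E\to H_B$ and let $\rho\colon A\to L(H_B)$ be the transport of the unital representation $\pi\oplus\sigma$. Kasparov's theorem \cite{Kas:cp} --- this is the one place where nuclearity of $A$ or of $B$ is used --- shows that the ample representation $\pi$ absorbs $\rho$: $\pi\oplus\sigma$ is approximately unitarily equivalent to $\pi$ modulo $K(H_B)$, so there are unitaries $w_n\in L(H_B)$ with $w_n\rho(a)w_n^{*}-\pi(a)\in K(H_B)$ for all $a\in A$ and $\|w_n\rho(a)w_n^{*}-\pi(a)\|\to 0$ for all $a\in A$. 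Let $j\colon E\to H_B$ be the inclusion of $E$ as the second summand, an isometry with $j\sigma(a)=\rho(a)j$, and set $v_n:=w_nj\in L(E,H_B)$. Each $v_n$ is an isometry and $v_n\sigma(a)-\pi(a)v_n=\bigl(w_n\rho(a)w_n^{*}-\pi(a)\bigr)v_n\in K(E,H_B)$, with norm tending to $0$ for every $a\in A$. Hence $v:=(v_n)_n$ is an isometry in $C_b(\NNN,L(E,H_B))$ with $v\sigma(a)-\pi(a)v\in C_0(\NNN,K(E,H_B))$, which is the defining property in Definition~\ref{def:absorbing}; as $A$ is unital, unitally absorbing and absorbing coincide. (The translation into the absorbing language is also carried out in \cite{Thomsen}.)

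The substantive content is Kasparov's theorem, which I would cite rather than reprove. Its proof follows the Weyl--von Neumann scheme behind Theorem~\ref{thm:Voiculescu-classic}: the unitaries $w_n$ are built as a ``staircase'' of partial isometries, and the engine at each stage is that a completely positive contraction $A\to L(H_B)$ can be approximated, in the point-norm topology on any finite subset of $A$, by one factoring (roughly) through a finite matrix algebra over $\CCC$ when $A$ is nuclear, or through finite matrices over $B$ when $B$ is nuclear; such a finite-type map is then dilated into an amplification sitting inside $\pi$ and the resulting defect is pushed off to infinity by a quasicentral approximate unit, adapted to the module $H_B$ via Kasparov's technical theorem. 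That finite-dimensional approximation is precisely where nuclearity is indispensable --- without it the theorem fails --- so it is the crux; everything else is the bookkeeping above.
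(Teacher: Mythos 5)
The paper gives no proof of this statement at all---it is quoted as a known theorem of Kasparov---and your proposal is in essence the same: the substantive content is delegated to Kasparov's generalized Weyl--von Neumann theorem in \cite{Kas:cp}, while your remaining steps (identifying $\pi$ with $\pi_0\otimes 1_B$ on $H_B\cong H\otimes B$, checking ampleness, stabilizing $H_B\oplus E\cong H_B$, and converting approximate unitary equivalence modulo $K(H_B)$ into the isometry formulation of Definition~\ref{def:absorbing}) constitute the standard and correct translation into Thomsen's absorbing language. This is fine; only the bookkeeping of whether Kasparov's theorem is quoted as $\pi\oplus\sigma\approx\pi$ or as $\rho\oplus\pi\approx\pi$ (which requires one extra use of $\pi\oplus\pi\approx\pi$ and transitivity) is glossed over, and that is harmless.
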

\begin{theorem}[Thomsen, \cite{Thomsen}]\label{thm:Thomsen-abs-exist}
For any separable $C^{*}$-algebras $A$ and $B$ there exist absorbing representations $\pi:A \to L(H_B)$.
\end{theorem}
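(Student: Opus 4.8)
The plan is to reduce to the unital case and then build an absorbing representation by hand, as a countable direct sum of representations chosen large enough to approximately dominate every other representation.

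\emph{Reductions.} By Definition~\ref{def:absorbing}, a representation $\pi\colon A\to L(H_B)$ is absorbing exactly when its unitalization $\widetilde\pi\colon\widetilde A\to L(H_B)$ is unitally absorbing, and $\widetilde A$ is again separable; so it suffices to produce a unitally absorbing representation of an arbitrary \emph{unital} separable $C^*$-algebra $A$ on $H_B$. Moreover, if $\sigma\colon A\to L(E)$ is any unital representation on a countably generated module, Kasparov's stabilization theorem gives $E\oplus H_B\cong H_B$; composing an isometry $H_B\cong E\oplus H_B\to H_B$ that intertwines a candidate $\pi$ with $\sigma\oplus\rho_0$ (for a fixed auxiliary unital $\rho_0$ on $H_B$) with the inclusion $E\hookrightarrow E\oplus H_B$ then produces an isometry intertwining $\pi$ with $\sigma$. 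Hence it is enough to check the absorbing condition against unital representations $\sigma$ on the standard module $H_B$ itself.

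\emph{Construction.} One cannot enumerate all such $\sigma$ up to norm perturbation, but one can enumerate enough \emph{finite-dimensional approximate pieces}. Using a dense sequence in $A$, choose a countable family $\{\varphi_j\}_{j\in\NNN}$ of unital completely positive maps from $A$ into matrix algebras over $\widetilde B$ (equivalently, finite-dimensional compressions of representations on $H_B$) that is dense, in the appropriate point topology, among all such maps, and dilate each $\varphi_j$ by the Kasparov--Stinespring construction to a unital representation $\rho_j\colon A\to L(F_j)$ with $F_j$ countably generated, so that $F_j\cong H_B$ after stabilizing. Then put
\[
\pi\ :=\ \bigoplus_{j\in\NNN}\rho_j^{\oplus\infty}\qquad\text{acting on}\qquad \Big(\bigoplus_{j\in\NNN}H_B^{\oplus\infty}\Big)\cong H_B ,
\]
which is a unital representation of $A$ on $H_B$.

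\emph{Verification and the main obstacle.} Given a unital $\sigma\colon A\to L(H_B)$, a finite set $F\subseteq A$ and $\ep>0$, one writes $H_B\cong\ell^2\otimes B$, compresses $\sigma$ by a finite-rank projection in the $\ell^2$-variable (and by an element of an approximate unit of $B$), and matches the resulting piece on $F$ with one of the $\varphi_j$; the summand $\rho_j$ of $\pi$, together with its Stinespring isometry and the inclusion of the summand, then provides a partial isometry intertwining $\sigma$ and $\pi$ to within $\ep$ on $F$ over that piece. Feeding successively finer such approximations, carried on pairwise orthogonal summands of $\pi$ as a parameter $n\in\NNN$ grows, into an Elliott-type approximate-intertwining argument assembles an isometry $v=(v_n)\in C_b(\NNN,L(H_B))$ with $v_n\sigma(a)-\pi(a)v_n\in K(H_B)$ and $\|v_n\sigma(a)-\pi(a)v_n\|\to0$ for all $a\in A$, which is exactly what Definition~\ref{def:absorbing} requires. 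The delicate point — and the technical heart of Thomsen's argument — is that this must be carried out with \emph{no nuclearity hypothesis} on $A$ or $B$: one cannot simply quote a Voiculescu--Kasparov absorption theorem (compare Theorems~\ref{thm:Voiculescu-classic} and~\ref{thm:Kasparov-abs}) to see that an ample representation absorbs everything, because point-norm approximate unitary equivalence of representations genuinely fails in general. The remedy is precisely to descend to finite-dimensional completely positive data, where the relevant density is available, and to run the entire intertwining inside the multiplier algebra $L(H_B)=M(K(H_B))$, keeping careful track of the strict topology and of Kasparov's stabilization theorem throughout.
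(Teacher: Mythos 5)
The paper does not actually prove this statement: it is imported from Thomsen \cite{Thomsen}, so the only proof to compare against is Thomsen's, and your outline is essentially his route --- enumerate a point-norm dense sequence of completely positive maps from $A$ into matrix algebras over the coefficients (possible because $A$ and $B$ are separable), dilate by the Kasparov--Stinespring construction, take an infinite-multiplicity direct sum, and verify absorption by a Voiculescu-type intertwining. Two concrete points keep this from being a proof rather than an accurate table of contents. First, a typing slip: you take \emph{unital} c.p.\ maps $\varphi_j\colon A\to M_{n_j}(\widetilde{B})$, but the KSGNS dilation of such a map is a Hilbert $\widetilde{B}$-module, not a Hilbert $B$-module (its inner product is $\widetilde{B}$-valued), so the modules $F_j$ cannot be stabilized against $H_B$ and your $\pi$ does not act on $H_B$ as written; moreover the blocks of $\sigma$ you intend to match are compressions, hence not unital. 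The right class is completely positive \emph{contractions} $A\to M_n(B)$, whose dilation modules are countably generated Hilbert $B$-modules, after which the stabilization and (for unital $A$) unitality of the summands go through.

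Second, the sentence about an ``Elliott-type approximate-intertwining argument'' is where all the difficulty lives, and it is asserted rather than executed. One must decompose $\sigma$ into blocks via an approximate unit of $K(H_B)$ quasicentral in $C^*(\sigma(A)\cup K(H_B))$ whose successive differences implement c.p.\ contractions into matrix algebras over $B$; match each block with some $\varphi_j$ and route it through a fresh orthogonal copy inside the infinite-multiplicity summand; check strict convergence of the resulting sum; verify that the defect $v_n\sigma(a)-\pi(a)v_n$ is \emph{exactly} compact (a norm-convergent series in $K(H_B)$), not merely small; and correct $v_n$, whose $1-v_n^*v_n$ is a priori only asymptotically small, to a genuine isometry as Definition~\ref{def:absorbing} demands. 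This is precisely the content of Thomsen's characterization of unitally absorbing representations by approximate domination of c.p.\ contractions; your argument would be cleaner if you proved or quoted that characterization and then observed that your $\pi$ dominates a dense family, hence every c.p.\ contraction $A\to M_n(B)$. Your closing remark is apt: no nuclearity enters anywhere, density of finite c.p.\ data replacing the Weyl--von Neumann/Kasparov approximation available only in the nuclear case (Theorems~\ref{thm:Voiculescu-classic} and~\ref{thm:Kasparov-abs}).
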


Given two $*$-representations $\pi_i:A \to L(E_i)$ we write that $\pi_1\underset{v}\preccurlyeq \pi_2$ if there is an isometry $v\in C_u(T,L(E_1,E_2))$ such that
\begin{equation*}\label{eqn:sVoic}
v\pi_1(a)-\pi_2(a)v\in C_0(T,K(E_1,E_2)).
\end{equation*}
If in addition $v\in C_u(T,L(E_1,E_2))$ is a unitary with the same property, then we write $\pi_1\underset{v}\approx \pi_2$.

Let $w^\infty:E_1^\infty \to E_1 \oplus E_1^\infty$ be the unitary defined by $w^\infty (h_0,h_1,h_2,...)=h_0 \oplus (h_1,h_2,...)$.
\begin{lemma}[Lemma 2.16, \cite{DadEil:class}]\label{lemma:DE-KK}
 Let $\pi_i:A \to L(E_i)$ be two representations and let $v\in L(E^{\infty}_1,E_2)$
be an isometry such that $v\pi_1^\infty(a)-\pi_2(a)v\in K(E^\infty_1,E_2)$ for all $a\in A$.
Then  $u=(1_{E_1}\oplus v)w^\infty v^*+(1_{E_2}-v v^*)\in L(E_2, E_1\oplus E_2)$
is a unitary operator such that $\pi_1(a)\oplus \pi_2(a)-u\pi_2(a)u^*\in K(E_1\oplus E_2)$
for all $a\in A$ and moreover
\[\|\pi_1(a)\oplus \pi_2(a)-u\pi_2(a)u^*\|\leq 6\|v\pi_1^\infty(a)-\pi_2(a)v\| +4\|v\pi_1^\infty(a^*)-\pi_2(a^*)v\|.\]
\end{lemma}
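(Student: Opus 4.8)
The plan is to treat this as a direct computational verification, since the statement is an explicit formula. First I would record the basic identities satisfied by the ingredients: $v^*v = 1_{E_1^\infty}$ (since $v$ is an isometry), $w^\infty$ is unitary with $(w^\infty)^* w^\infty = 1 = w^\infty (w^\infty)^*$ on the respective modules, and the orthogonal projection decomposition $1_{E_2} = vv^* + (1_{E_2} - vv^*)$ into the ranges of the two summands of $u$. Using these, I would check that $u^*u = 1_{E_2}$ and $uu^* = 1_{E_1 \oplus E_2}$ by multiplying out: the cross terms vanish because $(1_{E_1} \oplus v) w^\infty v^*$ has range inside $E_1 \oplus \mathrm{Image}(v)$ viewed appropriately, while $(1_{E_2} - vv^*)$ lands in the complement; the key point is that $w^\infty$ intertwines $v^\infty$-type constructions in the way the notation $E_1^\infty \to E_1 \oplus E_1^\infty$ suggests. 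This establishes that $u$ is a unitary.

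Next I would compute $u \pi_2(a) u^*$ and compare it with $\pi_1(a) \oplus \pi_2(a)$. Write $u = x + y$ with $x = (1_{E_1} \oplus v) w^\infty v^*$ and $y = 1_{E_2} - vv^*$. Since $y$ is a genuine projection-type term commuting with nothing in particular, I would instead exploit the hypothesis $v\pi_1^\infty(a) - \pi_2(a) v \in K(E_1^\infty, E_2)$: this says $v$ approximately intertwines $\pi_1^\infty$ and $\pi_2$ modulo compacts, with norm error controlled. Modulo compacts, $v^* \pi_2(a) \simeq \pi_1^\infty(a) v^*$ and $\pi_2(a) v \simeq v \pi_1^\infty(a)$; also $w^\infty (h \mapsto \pi_1^\infty(a))(w^\infty)^* = \pi_1(a) \oplus \pi_1^\infty(a)$ on $E_1 \oplus E_1^\infty$, which is exactly the source of the direct-sum splitting. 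Feeding these substitutions into $u \pi_2(a) u^*$ and being careful that $\pi_1^\infty(a)$ is compact only "locally" — here one uses that the difference, not the operators themselves, is compact, so every replacement costs a compact operator and a bounded norm penalty — one arrives at $\pi_1(a) \oplus \pi_2(a)$ plus a compact remainder.

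The norm estimate then comes from bookkeeping: each time $v\pi_1^\infty(a)$ is swapped for $\pi_2(a)v$ (or the adjoint version $v\pi_1^\infty(a^*)$ for $\pi_2(a^*)v$, needed because $u^*$ introduces $v^*\pi_2(a) = (\pi_2(a^*)v)^*$) it contributes one copy of the relevant norm, and counting the number of such swaps in the expansion of $u\pi_2(a)u^*$ — together with the fact that $w^\infty$, $v$, $v^*$, and the projections all have norm $\le 1$ — yields the bound $6\|v\pi_1^\infty(a) - \pi_2(a)v\| + 4\|v\pi_1^\infty(a^*) - \pi_2(a^*)v\|$. I expect the main obstacle to be purely organizational: keeping track of which module each operator acts on (the bookkeeping between $E_1^\infty$, $E_1 \oplus E_1^\infty$, $E_2$, and $E_1 \oplus E_2$ via $w^\infty$ and $v$), and making sure the swap count is tight enough to produce the stated constants $6$ and $4$ rather than something larger. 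No deep idea is needed beyond the observation that $w^\infty$ is precisely the device that peels off one copy of $E_1$ from $E_1^\infty$, which is why conjugating $\pi_1^\infty(a)$ by $w^\infty$ produces the summand $\pi_1(a)$ we want.
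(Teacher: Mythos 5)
Your plan is correct and is essentially the argument behind this lemma: the paper itself offers no proof but cites Lemma 2.16 of \cite{DadEil:class}, whose proof is exactly this direct verification that $u^*u=1_{E_2}$, $uu^*=1_{E_1\oplus E_2}$ (via $v^*v=1$ and orthogonality of the ranges of the two summands), followed by expanding $u\pi_2(a)u^*$ and repeatedly swapping $v\pi_1^\infty(a)\leftrightarrow\pi_2(a)v$ and $v^*\pi_2(a)\leftrightarrow\pi_1^\infty(a)v^*$ modulo compacts, with $w^\infty\pi_1^\infty(a)(w^\infty)^*=\pi_1(a)\oplus\pi_1^\infty(a)$ producing the summand $\pi_1(a)$. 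Your bookkeeping concern is harmless: a careful count of the swaps gives an even smaller constant than $6$ and $4$, so the stated estimate follows a fortiori.
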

Using this lemma, one obtains the following strengthened variation of Voiculescu's theorem \cite{Voi:Weyl-vn}. This result appears in \cite{DadEil:AKK}
as Theorem 3.11, except that the uniform continuity of the isometry $v$ and the unitary $u$ were not addressed explicitly in the statement.
\begin{theorem}\label{thm:Voiculescu}
Let $A$, $B$ be separable $C^{*}$-algebras and
let $\pi_i:A \to L(E_i)$, $i=1,2$ be two representations where $E_i\cong H_B$.
If $\pi_2$ is absorbing, then  $\pi_1\underset{v}\preccurlyeq \pi_2$ for some isometry $v\in C_u(T,L(E_1,E_2))$.
If both $\pi_1$ and $\pi_2$ are absorbing, then $\pi_1\underset{u}\approx \pi_2$ for some unitary $u\in C_u(T,L(E_1,E_2))$.
\end{theorem}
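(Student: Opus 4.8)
The plan is to deduce this from the classical (sequence-indexed) Voiculescu--Kasparov theorem --- in essentially this form \cite[Theorem~3.11]{DadEil:AKK} --- so that the only genuinely new points are the passage to a path over $T=[0,\infty)$ and the uniform continuity of that path; everything else reduces to a careful interpolation. I will describe the argument for the first assertion and indicate at the end how the second follows.

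First I would extract the discrete data. Since $\pi_2$ is absorbing, Definition~\ref{def:absorbing} (applied to the unitally absorbing $\widetilde{\pi_2}$ and to $\widetilde{\pi_1^\infty}$, where $\pi_1^\infty\colon A\to L(E_1^\infty)$ and $E_1^\infty\cong H_B$) provides isometries $V_n\in L(E_1^\infty,E_2)$ with $V_n\pi_1^\infty(a)-\pi_2(a)V_n\in K(E_1^\infty,E_2)$ and $\|V_n\pi_1^\infty(a)-\pi_2(a)V_n\|\to 0$ for all $a\in A$; passing to a subsequence I may assume $\|V_n\pi_1^\infty(a)-\pi_2(a)V_n\|\le 2^{-n}$ for all $a$ in an increasing sequence of finite subsets of $A$ whose union is dense in the unit ball. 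Applying Lemma~\ref{lemma:DE-KK} to each $V_n$ produces unitaries $U_n\in L(E_2,E_1\oplus E_2)$ with $c_n(a):=\pi_1(a)\oplus\pi_2(a)-U_n\pi_2(a)U_n^*\in K(E_1\oplus E_2)$ and, crucially, $\|c_n(a)\|\le 6\|V_n\pi_1^\infty(a)-\pi_2(a)V_n\|+4\|V_n\pi_1^\infty(a^*)-\pi_2(a^*)V_n\|\to 0$ for each $a$. Writing $\iota\colon E_1\hookrightarrow E_1\oplus E_2$ for the inclusion, the isometries $v_n:=U_n^*\iota\colon E_1\to E_2$ satisfy $v_n\pi_1(a)-\pi_2(a)v_n=U_n^*c_n(a)\iota\in K(E_1,E_2)$ with norm tending to $0$; the advantage of having produced the unitaries $U_n$ is that the consecutive products $W_n:=U_{n+1}U_n^*\in L(E_1\oplus E_2)$ are unitaries satisfying $[W_n,\pi_1(a)\oplus\pi_2(a)]=W_nc_n(a)-c_{n+1}(a)W_n\in K(E_1\oplus E_2)$ with norm tending to $0$.

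It remains to interpolate, and this is where I expect the real work to be. On each interval $[n,n+1]$ I would connect $U_n$ to $U_{n+1}$ by a path of the form $s\mapsto W_n(\phi_n(s))\,U_n$, where $\phi_n\colon[n,n+1]\to[0,1]$ is affine and $s\mapsto W_n(s)$ is a path of unitaries in $L(E_1\oplus E_2)$ from $1$ to $W_n$ such that each $W_n(s)$ commutes with $\pi_1(a)\oplus\pi_2(a)$ modulo $K(E_1\oplus E_2)$, such that $\sup_s\|[W_n(s),\pi_1(a)\oplus\pi_2(a)]\|$ is bounded by a fixed multiple of $\|c_n(a)\|+\|c_{n+1}(a)\|$ --- hence tends to $0$ as $n\to\infty$ for each $a$ --- and such that $s\mapsto W_n(s)$ is Lipschitz with a constant not depending on $n$. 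Granting such $W_n(s)$, the concatenated unitary path $U\colon T\to L(E_2,E_1\oplus E_2)$ (with $U(n)=U_n$) is bounded and globally Lipschitz, hence lies in $C_u(T,L(E_2,E_1\oplus E_2))$, and $v(t):=U(t)^*\iota$ is then the required isometry in $C_u(T,L(E_1,E_2))$: it is uniformly continuous, and for each $a$ the function $t\mapsto v(t)\pi_1(a)-\pi_2(a)v(t)$ is norm-continuous with values in $K(E_1,E_2)$ and norm tending to $0$, i.e.\ lies in $C_0(T,K(E_1,E_2))$. To construct the homotopies $W_n(s)$ I would use that $\rho:=\pi_1\oplus\pi_2$ is absorbing, and in particular absorbs infinitely many copies of itself: the standard Whitehead-type homotopies $W_n\oplus 1\oplus 1\oplus\cdots\sim 1\oplus W_n\oplus 1\oplus\cdots\sim\cdots\sim 1$ together with $\mathrm{diag}(W_n,W_n^*)\sim 1$ are implemented by \emph{scalar} rotation matrices, so they leave the commutator defect with $\rho$ (resp.\ $\rho^\infty$) unchanged; transporting this along an equivalence absorbing $\rho^\infty$ into $\rho$ contracts $W_n$ to $1$ through unitaries almost commuting with $\rho$, with the defect controlled as claimed, and the rotations together with affine reparametrizations and exponentials of small self-adjoints are all Lipschitz with universal constants. (When $\pi_2$ already carries a shift, e.g.\ is of the form $\sigma^\infty$, this step is elementary: the coordinate inclusions provide, at every scale, approximate intertwiners with pairwise orthogonal ranges, and one interpolates directly by the rotations $\theta\mapsto\cos\theta\cdot(\,\cdot\,)+\sin\theta\cdot(\,\cdot\,)$ without any recourse to Lemma~\ref{lemma:DE-KK}.)

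For the second assertion, with both $\pi_1$ and $\pi_2$ absorbing, I would apply the first assertion in both directions to get isometries $v\in C_u(T,L(E_1,E_2))$ and $v'\in C_u(T,L(E_2,E_1))$ with $v\pi_1(a)-\pi_2(a)v$ and $v'\pi_2(a)-\pi_1(a)v'$ in $C_0(T,K)$; applying Lemma~\ref{lemma:DE-KK} pointwise in $t$ --- its output is a fixed $\ast$-polynomial expression in $v(t)$, $v(t)^*$ and the fixed unitary $w^\infty$, so uniform continuity and the $C_0$-defect condition are preserved --- yields a uniformly continuous path of unitaries $E_2\to E_1\oplus E_2$, and symmetrically one of unitaries $E_1\to E_2\oplus E_1$; composing these with one another and with the flip unitary $E_1\oplus E_2\cong E_2\oplus E_1$ produces a path of unitaries $u\in C_u(T,L(E_1,E_2))$ with $u\pi_1(a)-\pi_2(a)u\in C_0(T,K(E_1,E_2))$, as required (here uniform continuity and the defect condition are preserved because, on the set of operators of norm $\le 1$, multiplication, adjunction and the relevant fixed formulas are uniformly continuous). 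The step I expect to be the main obstacle is the interpolation in the third paragraph: arranging the homotopy joining consecutive $U_n$ so that its commutator defect remains simultaneously compact and asymptotically vanishing while its modulus of continuity stays uniform in $n$. This is precisely where the absorbing hypothesis is essential --- it is what provides the room for the ``shift to infinity'' homotopy that kills the $K_1$-obstruction to contracting $W_n$ inside the asymptotic commutant --- and where one has to keep every homotopy explicit enough to extract a uniform Lipschitz bound.
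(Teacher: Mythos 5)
Your reduction to discrete data and your treatment of the second assertion are broadly reasonable (modulo a small slip: Lemma~\ref{lemma:DE-KK} needs a path of isometries intertwining $\pi_1^\infty$, not $\pi_1$, with $\pi_2$, so one must first apply the first assertion to the ampliation $\pi_1^\infty$, which is how the paper proceeds). The genuine gap is exactly at the step you flag as the main obstacle, and it is not a technicality that careful bookkeeping will repair. Your path on $[n,n+1]$ has the form $s\mapsto W_n(\phi_n(s))U_n$ where $W_n(s)$ is required to be a unitary commuting with $\rho=\pi_1\oplus\pi_2$ modulo $K(E_1\oplus E_2)$ for every $s$; such a path is precisely a path in the unitary group of $\D(\rho)$ joining $1$ to $W_n=U_{n+1}U_n^*$. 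Its existence is obstructed: $K_1(\D(\rho))$ does not vanish for general absorbing $\rho$ (by Paschke duality $K_*(\D(\rho)/\C(\rho))\cong KK_{*+1}(A,B)$, and already for $A=C(X)$, $B=\CCC$ one gets nonzero groups for $\D(\rho)$ itself), and nothing in the construction of the $U_n$ --- which come from unrelated applications of the absorption property to independently chosen $V_n$ --- forces the transition unitaries $W_n$ to lie in the connected component of the identity. The appeal to a ``shift to infinity'' homotopy killing the $K_1$-obstruction is not substantiated: Whitehead-type rotations contract stabilized elements such as $W\oplus W^*$ or move $W\oplus 1^\infty$ past the shift, but they do not contract an arbitrary unitary of $\D(\rho)$ to $1$; if they did, $K_1(\D(\rho))$ would always vanish, which is false. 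Even when the topological obstruction happens to vanish, your additional quantitative demands (commutator defect along the path bounded by a universal multiple of $\|c_n(a)\|+\|c_{n+1}(a)\|$, Lipschitz constant uniform in $n$) are of ``basic homotopy lemma'' strength and are nowhere established in the sketch.

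Ironically, your parenthetical remark contains the correct idea, and it applies in full generality: since $\pi_2$ is absorbing it absorbs $\pi_2^\infty$, so $\pi_2$ always ``carries a shift'' up to compact perturbation. This is how the paper argues: composing approximate intertwiners $E_1\to E_2^\infty$ landing in distinct coordinates with an isometry implementing $\pi_2^\infty\preccurlyeq\pi_2$, one obtains isometries $v_n\in L(E_1,E_2)$ with \emph{mutually orthogonal ranges}, asymptotically intertwining $\pi_1$ and $\pi_2$. Orthogonality makes the interpolation trivial: $\mathbf{v}(n+s)=(1-s)^{1/2}v_n+s^{1/2}v_{n+1}$ is an exact isometry for every $s$, is uniformly continuous, and its intertwining defect is controlled by the defects at the endpoints, giving $\pi_1\underset{\mathbf{v}}\preccurlyeq\pi_2$ with no unitaries $U_n$, no contraction of transition unitaries, and no quantitative homotopy input. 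Lemma~\ref{lemma:DE-KK} then enters only in the second assertion, applied pointwise in $t$ to an isometry path intertwining $\pi_1^\infty$ with $\pi_2$, followed by the symmetry argument you describe. As written, your proof of the first assertion does not go through.
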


\begin{proof} Since $\pi_2$ absorbs $\pi^\infty_2$ there is an isometry $u=(u_n)_n\in C_b(\mathbb{N},L(E_2^\infty,E_2))$ such that
$u\pi_2^\infty(a)-\pi_2(a)u\in C_0(\mathbb{N},K(E_2^\infty,E_2))$ for all $a\in A$.
Since $\pi_2$  absorbs $\pi_1$, there is a sequence of isometries $w_n \in L(E_1,E_2^\infty)$ with mutually orthogonal ranges
such that $w_n\pi_1(a)-\pi_2^\infty(a)w_n\in K (E_1,E_2^\infty)$ and $\lim_{n\to \infty} \|w_n\pi_1(a)-\pi_2^\infty(a)w_n\|=0$ for all $a\in A$. Then $v_n =u_n w_n \in L(E_1,E_2)$ is a sequence of isometries
with orthogonal ranges such that the corresponding isometry $v\in C_b(\mathbb{N}, L(E_1,E_2))$ satisfies
$v\pi_1(a)-\pi_2(a)v\in C_0(\mathbb{N},K(E_1,E_2))$ for all $a\in A$. This follows from the identity
\[u_n w_n\pi_1(a)-\pi_2(a)u_n w_n=u_n(w_n\pi_1(a)-\pi_2^\infty(a)w_n)+(u_n\pi_2^\infty(a)-\pi_2(a)u_n)w_n.\]
Since $v_n^*v_m=0$ for $n\neq m$,
 one observes that
 $\mathbf{v}(n+s)=(1-s)^{1/2}v_n+s^{1/2}v_{n+1}$, $0\leq s \leq 1$, extends $v$ to a uniformly continuous  isometry $\mathbf{v}\in C_u(T,L(E_1,E_2))$ that satisfies $\pi_1\underset{\mathbf{v}}\preccurlyeq \pi_2$.

For the second part of the statement, we note that by the
  first part $\pi^\infty_1\underset{v}\preccurlyeq \pi_2$. Thus,
  $ v\pi_1^\infty(a)-\pi_2(a)v\in C_0(T,K(E^\infty_1,E_2))$, for all $a\in A$ where $v=(v_t)_{t\in T}$ is a uniformly continuous isometry with $v_t\in L(E^{\infty}_1,E_2)$.
It follows by  Lemma~\ref{lemma:DE-KK}  that
\[u_t=(1_{E_1}\oplus v_t)w^\infty v_t^*+(1_{E_2}-v_t v_t^*)\]
is a uniformly continuous unitary such that $\pi_1\oplus \pi_2 \underset{u}\approx \pi_2$. By symmetry we have that $\pi_1\oplus \pi_2 \underset{u}\approx \pi_1$ and hence $\pi_1 \underset{u}\approx \pi_2$.
\end{proof}

\section{Dual algebras}\label{sec:1}
Let $A$, $B$ be separable $C^{*}$-algebras and let
 $\pi:A \to L(H_B)$ be a $*$-representation.
 \begin{definition}
   The localization algebra $\CL(\pi)$ associated to $\pi$ is the $C^{*}$-subalgebra of $C_u(T,L(H_B))$ consisting
of all the functions $f$ such that\\
$[f,\pi(a)]\in C_0(T,K(H_B))$ and  $\pi(a)f\in C_u(T,K(H_B))$ for all $a\in A$.
 \end{definition}
 While $\CL(\pi)$ is the central object of the paper, we also need to consider
a series of  pairs of $C^{*}$-algebras and ideals which will play a supporting role:
\[\D(\pi)=\{b\in L(H_B)\colon [b,\pi(a)]\in K(H_B),\,\forall a\in A\},\]
\[\C(\pi)=\{b\in L(H_B)\colon  \pi(a)b\in K(H_B),\,\forall a\in A\},\]
and their parametrized versions,
\[\DT(\pi)=\{f\in C_u(T,L(H_B))\colon [f,\pi(a)]\in C_u(T,K(H_B)),\,\forall a\in A\}\cong C_u(T,\D(\pi)),\]
\[\CT(\pi)=\{f\in C_u(T,L(H_B))\colon  \pi(a)f\in C_u(T,K(H_B)),\,\forall a\in A\}\cong C_u(T,\C(\pi)).\]
The evaluation map at $0$ leads to the pair
\[\DT^0(\pi)=\{f\in \DT(\pi)\colon f(0)={0}\},\]
\[\CT^0(\pi)=\{f\in \CT(\pi)\colon f(0)={0}\}.\]

Finally, we view the localization algebra $\CL(\pi)$ as an ideal of
\[\DL(\pi)=\{f\in C_u(T,L(H_B))\colon [f,\pi(a)]\in C_0(T,K(H_B)),\,\forall a\in A\},\]
\[\CL(\pi)=\{f\in \DL(\pi)\colon \pi(a)f\in C_u(T,K(H_B)), \, \forall a\in A\}.\]

To simplify some of the statements it is useful to introduce the following notation:
$A_1(\pi)=\DT(\pi)$, $A_2(\pi)=\CT(\pi)$, $A_3(\pi)=\DT^0(\pi)$, $A_4(\pi)=\CT^0(\pi)$, $A_5(\pi)=\DL(\pi)$ and $A_6(\pi)=\CL(\pi)$.
We are going to see that the isomorphism classes of these $C^{*}$-algebras are independent of $\pi$, provided that $\pi$ is an absorbing representation.
We follow the presentation from \cite[Section 5.2]{HigRoe:khomology} where  analogous properties of $\D(\pi)$ and $\C(\pi)$ are established, except that we need to employ a strengthened version of Voiculescu's theorem, contained in Theorem~\ref{thm:Voiculescu} above.

Let $\pi_1,\pi_2:A \to L(H_B)$ be two representations.
\begin{lemma} If $\pi_1\underset{v}\preccurlyeq \pi_2$, then the equation $\Phi_v(f)=v f v^*$ defines
 a $*$-homomorphism
$\Phi_v:\DT(\pi_1)\to \DT(\pi_2)$ with the property that
$\Phi_v(A_j(\pi_1))\subset A_j(\pi_2)$ for all $1 \leq j \leq 6$.
\end{lemma}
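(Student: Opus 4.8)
The plan is to work entirely with the pointwise formula $\Phi_v(f)(t)=v_tf(t)v_t^*$ together with the "defect" function $c_a:=v\pi_1(a)-\pi_2(a)v$, which by the hypothesis $\pi_1\underset{v}\preccurlyeq\pi_2$ lies in $C_0(T,K(H_B))$ for every $a\in A$. First I would dispose of the elementary structural facts. Since $v$ is a bounded, uniformly continuous function into $L(H_B)$ with $\|v_t\|=1$, the same is true of $v^*$, and the product of two bounded uniformly continuous $L(H_B)$-valued functions is again bounded and uniformly continuous; hence $vfv^*\in C_u(T,L(H_B))$ whenever $f$ is. Linearity and $*$-compatibility of $\Phi_v$ are immediate, and multiplicativity follows from the pointwise identity $v^*v=1_{H_B}$, since then $\Phi_v(fg)=vfgv^*=vfv^*\,vgv^*=\Phi_v(f)\Phi_v(g)$. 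So $\Phi_v$ is a $*$-homomorphism of $C_u(T,L(H_B))$ into itself, and everything reduces to checking the six inclusions $\Phi_v(A_j(\pi_1))\subset A_j(\pi_2)$.

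The heart of the matter is a single commutator identity. Using $\pi_2(a)v=v\pi_1(a)-c_a$ and its adjoint form $v^*\pi_2(a)=\pi_1(a)v^*-c_{a^*}^*$ (obtained by taking adjoints in the defining relation and then replacing $a$ by $a^*$), a direct expansion yields, for any $f\in C_u(T,L(H_B))$ and $a\in A$,
\[
[\,vfv^*,\pi_2(a)\,]=v\,[f,\pi_1(a)]\,v^*-vf\,c_{a^*}^*+c_a\,fv^*,
\]
and likewise $\pi_2(a)\,vfv^*=v\,\pi_1(a)f\,v^*-c_a\,fv^*$. In both identities the terms containing $c_a$ or $c_{a^*}^*$ lie in $C_0(T,K(H_B))$, because $c_a,c_{a^*}\in C_0(T,K(H_B))$ and $K(H_B)$ is an ideal of $L(H_B)$; and conjugation by the bounded uniformly continuous functions $v,v^*$ maps $C_u(T,K(H_B))$ into itself and $C_0(T,K(H_B))$ into itself, again by the ideal property.

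From these two identities all six inclusions fall out by inspection. If $f\in\DT(\pi_1)$ then $[f,\pi_1(a)]\in C_u(T,K(H_B))$, so the right-hand side of the commutator identity lies in $C_u(T,K(H_B))$, giving $\Phi_v(f)\in\DT(\pi_2)$; this is $j=1$. For $j=2$, if $\pi_1(a)f\in C_u(T,K(H_B))$ then the second identity shows $\pi_2(a)\Phi_v(f)\in C_u(T,K(H_B))$. For $j=3,4$ one adds the trivial remark that $\Phi_v(f)(0)=v_0f(0)v_0^*=0$ whenever $f(0)=0$. For $j=5$, if $[f,\pi_1(a)]\in C_0(T,K(H_B))$ then \emph{every} term on the right-hand side of the commutator identity is in $C_0(T,K(H_B))$, whence $\Phi_v(f)\in\DL(\pi_2)$; and $j=6$ is just the conjunction of the arguments for $j=5$ and $j=2$.

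I do not expect a genuine obstacle; the only delicate points are bookkeeping. One must be careful to use the adjoint-side defect $c_{a^*}^*$ rather than $c_a^*$ when expanding $[vfv^*,\pi_2(a)]$, and to remember that the $C_0$ terms are harmless inside a $C_u$ condition precisely because $C_0(T,K(H_B))\subset C_u(T,K(H_B))$. The one slightly non-formal ingredient is the assertion that the product of two bounded uniformly continuous operator-valued functions is uniformly continuous, which is what makes $\Phi_v$ land in $C_u(T,L(H_B))$ to begin with; this is routine from the estimate $\|v_sf(s)v_s^*-v_tf(t)v_t^*\|\le\|v_s-v_t\|\,\|f\|+\|f(s)-f(t)\|+\|f\|\,\|v_s^*-v_t^*\|$.
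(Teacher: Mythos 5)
Your proof is correct and follows essentially the same route as the paper: the two expansion identities you write for $[vfv^*,\pi_2(a)]$ and $\pi_2(a)vfv^*$ in terms of the defect $v\pi_1(a)-\pi_2(a)v$ are exactly the identities the paper's proof rests on, with the remaining checks (uniform continuity, isometry giving multiplicativity, the six inclusions) being the routine bookkeeping the paper leaves implicit.
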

\begin{proof}  This follows from the identities:
\[  [vfv^*,\pi_2(a)]=v[f,\pi_1(a)]v^*+(v\pi_1(a)-\pi_2(a)v)f v^*-v f(v\pi_1(a^*)-\pi_2(a^*)v)^*,\]
\[\pi_2(a)v f v^*=v\pi_1(a)f v^*-(v\pi_1(a)-\pi_2(a)v)f v^*.\qedhere\]
\end{proof}
\begin{corollary}\label{cor:indep}
 Let $\pi_1,\pi_2:A \to L(H_B)$ be  two absorbing representations. Then $A_j(\pi_1)\cong A_j(\pi_2)$ for all $1 \leq j \leq 6$.
\end{corollary}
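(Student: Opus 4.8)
The plan is to obtain the isomorphisms all at once by conjugating with a suitable unitary supplied by the strengthened Voiculescu theorem, and then checking that conjugation by its adjoint provides the inverse. Since $\pi_1$ and $\pi_2$ are both absorbing and act on modules isomorphic to $H_B$, Theorem~\ref{thm:Voiculescu} yields a unitary $u\in C_u(T,L(H_B))$ with $\pi_1\underset{u}\approx\pi_2$, i.e.\ $u\pi_1(a)-\pi_2(a)u\in C_0(T,K(H_B))$ for all $a\in A$. In particular $\pi_1\underset{u}\preccurlyeq\pi_2$, so the preceding lemma produces a $*$-homomorphism $\Phi_u:\DT(\pi_1)\to\DT(\pi_2)$, $f\mapsto ufu^*$, satisfying $\Phi_u(A_j(\pi_1))\subset A_j(\pi_2)$ for all $1\le j\le 6$.

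Next I would note that the adjoint $u^*$ is again a unitary in $C_u(T,L(H_B))$ and implements the opposite relation $\pi_2\underset{u^*}\preccurlyeq\pi_1$. Indeed, multiplying $u\pi_1(a)-\pi_2(a)u$ on the left and right by $u^*$ and using $u^*u=uu^*=1$ gives
\[\pi_1(a)u^*-u^*\pi_2(a)=u^*\big(u\pi_1(a)-\pi_2(a)u\big)u^*\in C_0(T,K(H_B)),\]
since $C_0(T,K(H_B))$ is an ideal in $C_u(T,L(H_B))$. Hence the lemma applies again and yields $\Phi_{u^*}:\DT(\pi_2)\to\DT(\pi_1)$ with $\Phi_{u^*}(A_j(\pi_2))\subset A_j(\pi_1)$ for all $j$.

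Finally, because $u$ is a unitary one has $\Phi_{u^*}\circ\Phi_u(f)=u^*(ufu^*)u=f$ and $\Phi_u\circ\Phi_{u^*}(g)=u(u^*gu)u^*=g$, so $\Phi_u$ and $\Phi_{u^*}$ are mutually inverse $*$-isomorphisms $\DT(\pi_1)\cong\DT(\pi_2)$. Since each carries the subalgebra $A_j$ of the source into the subalgebra $A_j$ of the target, $\Phi_u$ restricts to an isomorphism $A_j(\pi_1)\cong A_j(\pi_2)$ for every $1\le j\le 6$, which is the assertion of the corollary.

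I do not expect a genuine obstacle here: essentially all the analytic work — the use of Lemma~\ref{lemma:DE-KK} and the bookkeeping needed to keep the intertwining isometry/unitary uniformly continuous — has already been absorbed into Theorem~\ref{thm:Voiculescu}. The only point demanding even a line of verification is that $u^*$ still realizes $\pi_2\preccurlyeq\pi_1$, namely the ideal computation displayed above; the rest is formal manipulation with the unitary $u$.
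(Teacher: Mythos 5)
Your argument is correct and is essentially the paper's proof: both invoke Theorem~\ref{thm:Voiculescu} to obtain a uniformly continuous unitary $u$ with $\pi_1\underset{u}\approx\pi_2$ and then conjugate by $u$, your additional verification that $u^*$ intertwines in the reverse direction and that $\Phi_{u^*}$ inverts $\Phi_u$ being exactly the routine check the paper leaves implicit when it asserts that the maps $\Phi_v\colon A_j(\pi_1)\to A_j(\pi_2)$ are isomorphisms.
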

\begin{proof}  Proposition~\ref{thm:Voiculescu} yields a unitary
 $v\in C_u(T,L(H_B))$ such that $\pi_1\underset{v}\approx \pi_2$.
 The corresponding maps $\Phi_v: A_j(\pi_1)\to A_j(\pi_2)$ are isomorphisms.
\end{proof}

\begin{lemma}\label{lemma:K-theory-natural}
Let $\pi_1,\pi_2:A \to L(H_B)$ be two representations of $A$ and
suppose that $v_1,v_2$ are two isometries such that
$\pi_1\underset{v_i}\preccurlyeq \pi_2$, $i=1,2$. Then $(\Phi_{v_1})_*=(\Phi_{v_2})_*:K_*(A_j(\pi_1))\to K_*(A_j(\pi_2))$
 for all $1\leq j \leq 6$.
\end{lemma}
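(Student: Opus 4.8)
The plan is to reduce the statement to a homotopy argument in the spirit of the standard proof that homotopic $*$-homomorphisms induce the same map on $K$-theory, applied simultaneously to all six algebras $A_j$. First I would form the ``combined'' isometry: consider the representation $\pi_2\oplus\pi_2$ on $H_B\oplus H_B\cong H_B$, and note that $v_1\oplus v_2$, suitably interpreted via an identification $H_B\oplus H_B\cong H_B$, gives an isometry implementing $\pi_1\oplus\pi_1\underset{}{\preccurlyeq}\pi_2\oplus\pi_2$. The key point, exactly as in \cite[Section 5.2]{HigRoe:khomology}, is that any two isometries from $H_B$ into $H_B$ (now working over the module $H_B$, where $H_B\oplus H_B\cong H_B$) are connected by a norm-continuous path of isometries $v(s)$, $s\in[0,1]$, obtained by rotating $v_1$ into $v_2$ inside $L(H_B\oplus H_B)$ (for instance, using that $v_1 v_1^*$ and $v_2 v_2^*$ are both properly infinite projections with properly infinite complements, hence Murray--von Neumann equivalent, and then conjugating by the resulting unitary path).

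Next I would check that for each $s$, the isometry $v(s)$ still satisfies $\pi_1\underset{v(s)}{\preccurlyeq}\pi_2$ (after passing to direct sums with absorbing representations so that the orthogonal-complement pieces are harmless), so that $\Phi_{v(s)}$ is defined on each $A_j(\pi_1)$ with values in $A_j(\pi_2)$. The map $s\mapsto \Phi_{v(s)}(f)=v(s)fv(s)^*$ is then norm-continuous for each fixed $f$, since $v(s)$ varies norm-continuously and $f$ is bounded; moreover the parametrized family $(s,t)\mapsto v(s)_t f_t v(s)_t^*$ is jointly appropriately continuous, so it actually defines a $*$-homomorphism $A_j(\pi_1)\to C([0,1], A_j(\pi_2))$, i.e.\ a homotopy between $\Phi_{v(0)}=\Phi_{v_1}$ and $\Phi_{v(1)}=\Phi_{v_2}$ (after the direct-sum bookkeeping has been unwound using Corollary~\ref{cor:indep} and the fact that the inclusion $A_j(\pi_2)\hookrightarrow A_j(\pi_2\oplus\pi_2)$ via a corner induces an isomorphism on $K$-theory). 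Homotopy invariance of $K$-theory then gives $(\Phi_{v_1})_*=(\Phi_{v_2})_*$ on $K_*(A_j(\pi_1))$.

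The main obstacle is the bookkeeping needed to make ``rotate $v_1$ into $v_2$'' rigorous while staying inside the class of isometries that implement $\pi_1\preccurlyeq\pi_2$: a naive straight-line or unitary-rotation path between $v_1$ and $v_2$ need not intertwine the representations even up to compacts at intermediate times. The standard fix is to first replace $\pi_2$ by $\pi_2\oplus\pi_2\oplus\cdots$ (absorbing, by hypothesis together with Theorem~\ref{thm:Voiculescu}) and $v_i$ by $v_i$ followed by the inclusion into the first copy; then $v_1$ and $v_2$ have ``infinitely much room'' in the target, their range projections are equivalent rel the intertwining property, and one can genuinely connect them through isometries each satisfying $\pi_1\underset{v(s)}{\preccurlyeq}\pi_2\oplus\pi_2\oplus\cdots$. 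One must also verify that all paths and homotopies are uniformly continuous in the $T$-variable, i.e.\ live in $C_u(T,-)$, which is routine given that $v_1,v_2\in C_u(T,L(H_B))$ and the rotation can be taken constant in $t$. Once these points are handled, the argument is exactly the classical one and applies uniformly to all six algebras $A_1,\dots,A_6$ since $\Phi_v$ respects the defining ideal/subalgebra conditions by the previous lemma.
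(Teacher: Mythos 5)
Your overall strategy (pass to $2\times 2$ matrices and show the two corner copies of $\Phi_{v_1}$, $\Phi_{v_2}$ agree in $K$-theory) is in the right spirit, but as written there are two genuine gaps. First, the crucial step is never carried out: you need a norm-continuous path of isometries $v(s)$ joining (the stabilized) $v_1$ to $v_2$ such that \emph{every} $v(s)$ still satisfies $\pi_1\underset{v(s)}\preccurlyeq\pi_2$, i.e.\ intertwines modulo $C_0(T,K(H_B))$. You correctly identify this as the main obstacle, but the proposed remedy (Murray--von Neumann equivalence of the range projections, ``properly infinite'' complements, conjugating by a unitary path) is only asserted: an equivalence of range projections gives a partial isometry, not a path, it says nothing about the intertwining condition at intermediate times, and the properly-infinite claims are not automatic in $L(H_B)$ for general $B$. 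Second, your fix invokes absorption of $\pi_2$ (``by hypothesis''), which is \emph{not} a hypothesis of the lemma, and after stabilizing to $\pi_2^\infty$ you need the corner inclusion $A_j(\pi_2)\hookrightarrow A_j(\pi_2^\infty)$ to be a $K$-theory isomorphism --- but that statement is exactly Corollary~\ref{cor:inclusion}, which is proved \emph{from} this lemma, so the argument is circular as it stands. (The finite inclusion $A_j(\pi_2)\hookrightarrow A_j(\pi_2\oplus\pi_2)\cong M_2(A_j(\pi_2))$ is unproblematic; the infinite one is not.)

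For comparison, the paper's proof needs no path, no absorption and no stabilization beyond $M_2$: the self-adjoint unitary $u=\begin{pmatrix} 1-v_1v_1^* & v_1v_2^* \\ v_2v_1^* & 1-v_2v_2^*\end{pmatrix}\in M_2(\DL(\pi_2))$ conjugates $\begin{pmatrix}\Phi_{v_1}&0\\0&0\end{pmatrix}$ into $\begin{pmatrix}0&0\\0&\Phi_{v_2}\end{pmatrix}$, and conjugation by a unitary multiplier acts trivially on $K$-theory, which immediately gives $(\Phi_{v_1})_*=(\Phi_{v_2})_*$ for all six algebras. If you prefer a homotopy argument in your style, it can be salvaged without absorption or infinite sums: the isometries $W(s)=\begin{pmatrix}\cos(\tfrac{\pi s}{2})\,v_1\\ \sin(\tfrac{\pi s}{2})\,v_2\end{pmatrix}\in C_u(T,L(H_B,H_B\oplus H_B))$ satisfy $W(s)^*W(s)=1$ for every $s$ (the two components land in orthogonal summands), each intertwines $\pi_1$ with $\pi_2\oplus\pi_2$ modulo $C_0(T,K)$ because $v_1$ and $v_2$ do, and $\Phi_{W(s)}$ gives a pointwise norm-continuous homotopy in $A_j(\pi_2\oplus\pi_2)\cong M_2(A_j(\pi_2))$ from $\mathrm{diag}(\Phi_{v_1},0)$ to $\mathrm{diag}(0,\Phi_{v_2})$; this is the construction your sketch is missing.
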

\begin{proof}
The unitary $u=\begin{pmatrix} 1-v_1 v_1^* &v_1 v_2^*\\v_2 v_1^* &  1-v_2 v_2^* \end{pmatrix}\in M_2(\DL(\pi_2))$ conjugates  $\begin{pmatrix}\Phi_{v_1}&0\\0 &  0 \end{pmatrix}$ over  $\begin{pmatrix}0&0\\0 &  \Phi_{v_1}\end{pmatrix}.$
It follows that $(\Phi_{v_1})_*=(\Phi_{v_2})_*:K_*(\DT(\pi_1))\to K_*(\DT(\pi_2))$.
Similarly, one verifies that the equality $(\Phi_{v_1})_*=(\Phi_{v_2})_*:K_*(A_j(\pi_1))\to K_*(A_j(\pi_2))$
holds for all $1\leq j \leq 6$.
\end{proof}

Denote by $\pi^\infty$ the direct sum $\pi^\infty =\bigoplus_{n=1}^\infty \pi : A \to L(H_B^\infty)=L(\bigoplus_{n=1}^\infty H_B)$.
\begin{corollary}\label{cor:inclusion} If $\pi:A \to L(H_B)$ is an absorbing representation, then
the inclusion $\DT(\pi)\to \DT(\pi^\infty)$, $f \mapsto(f,0,0,...)$ induces  isomorphisms on $K$-theory:
$K_*(A_j(\pi))\to K_*(A_j(\pi^\infty))$, for all $1 \leq j \leq 6$.
\end{corollary}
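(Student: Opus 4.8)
The plan is to identify the inclusion with one of the maps $\Phi_v$ introduced above, and then to construct its inverse on $K$-theory using the strengthened Voiculescu theorem.

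First note that the inclusion $\DT(\pi)\to\DT(\pi^\infty)$, $f\mapsto(f,0,0,\dots)$, is exactly $\Phi_v$ for the constant isometry $v\in L(H_B,H_B^\infty)$ given by $v\xi=(\xi,0,0,\dots)$: indeed $vfv^*=(f,0,0,\dots)$, and the intertwining relation $v\pi(a)=\pi^\infty(a)v$ holds \emph{on the nose}, so in particular $\pi\underset{v}\preccurlyeq\pi^\infty$. By the lemma above defining $\Phi_v$, this inclusion therefore does carry $A_j(\pi)$ into $A_j(\pi^\infty)$ for every $1\le j\le 6$, and it remains only to see that $(\Phi_v)_*$ is an isomorphism on $K$-theory.

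To build an inverse, use that $H_B^\infty\cong H_B$ as Hilbert $B$-modules, so that $\pi^\infty$ may be regarded as a representation on a module isomorphic to $H_B$; since $\pi$ is absorbing, the first part of Theorem~\ref{thm:Voiculescu} provides a uniformly continuous isometry $w\in C_u(T,L(H_B^\infty,H_B))$ with $\pi^\infty\underset{w}\preccurlyeq\pi$, hence a $*$-homomorphism $\Phi_w\colon A_j(\pi^\infty)\to A_j(\pi)$ for each $j$. Now compose: $\Phi_w\circ\Phi_v=\Phi_{wv}$ and $\Phi_v\circ\Phi_w=\Phi_{vw}$. Because $v\pi(a)=\pi^\infty(a)v$ exactly, the only defect in the intertwining relations for the composite isometries comes from $w$, so $wv$ is an isometry with $\pi\underset{wv}\preccurlyeq\pi$ and $vw$ is an isometry with $\pi^\infty\underset{vw}\preccurlyeq\pi^\infty$. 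Comparing $wv$ with $\id_{H_B}$ and $vw$ with $\id_{H_B^\infty}$ — both of which trivially implement $\preccurlyeq$ and satisfy $\Phi_{\id}=\id$ — Lemma~\ref{lemma:K-theory-natural} yields $(\Phi_{wv})_*=\id$ on $K_*(A_j(\pi))$ and $(\Phi_{vw})_*=\id$ on $K_*(A_j(\pi^\infty))$. Thus $(\Phi_v)_*$ and $(\Phi_w)_*$ are mutually inverse isomorphisms, which is the assertion.

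There is no real obstacle here: the only points requiring care are the identification $H_B^\infty\cong H_B$ (which is what licenses the appeal to Theorem~\ref{thm:Voiculescu}) and the routine verification that $wv$ and $vw$ still implement the relation $\preccurlyeq$, so that Lemma~\ref{lemma:K-theory-natural} applies. One could alternatively first check that $\pi^\infty$ is again absorbing and then combine Corollary~\ref{cor:indep} with Lemma~\ref{lemma:K-theory-natural}; the route above avoids this extra step.
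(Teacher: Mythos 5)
Your argument is correct, and it shares the paper's two main ingredients: you identify the inclusion with $\Phi_v$ for the constant isometry $v$ (noting the exact intertwining $v\pi(a)=\pi^\infty(a)v$, so $\pi\underset{v}\preccurlyeq\pi^\infty$), and you invoke Lemma~\ref{lemma:K-theory-natural} to compare induced maps on $K$-theory. Where you diverge is in how invertibility of $(\Phi_v)_*$ is obtained. The paper uses the second half of Theorem~\ref{thm:Voiculescu} to produce a single unitary $u$ with $\pi\underset{u}\approx\pi^\infty$, so that $\Phi_u$ is an honest isomorphism of the algebras $A_j$, and then concludes $(\Phi_v)_*=(\Phi_u)_*$ by one application of Lemma~\ref{lemma:K-theory-natural}; this implicitly uses that $\pi^\infty$ is again absorbing (needed for the ``both absorbing'' hypothesis). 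You instead use only the first half of Theorem~\ref{thm:Voiculescu} (via $H_B^\infty\cong H_B$ and absorption of $\pi$) to get an isometry $w$ with $\pi^\infty\underset{w}\preccurlyeq\pi$, check that the composites $wv$ and $vw$ still implement $\preccurlyeq$ (which indeed they do, even without the exact intertwining of $v$, since the defect terms are products of $C_0(T,K)$ functions with bounded adjointable operators), and apply Lemma~\ref{lemma:K-theory-natural} twice, comparing $wv$ and $vw$ with the identity isometries, to see that $(\Phi_v)_*$ and $(\Phi_w)_*$ are mutually inverse. The trade-off is minor but real: your route avoids having to know (or remark) that $\pi^\infty$ is absorbing, at the cost of the composability check, the functoriality $\Phi_{wv}=\Phi_w\circ\Phi_v$, and a second application of the lemma; the paper's route is shorter once one grants that absorption passes to $\pi^\infty$. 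Both are complete proofs.
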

\begin{proof}
 We have $\pi\underset{v}\preccurlyeq \pi^\infty$, where $v\in C_u(T,L(H_B,H_B^\infty))$ is the constant isometry defined by  $v(t)(h)=(h,0,0,...)$ for any $t \in T$ and $h \in H_B$. The inclusion map from the statement coincides with $\Phi_v$.
 On the other hand $\pi\underset{u}\approx \pi^\infty$ since $\pi$ is absorbing and hence
  $\Phi_u$ is an isomorphism. We conclude the proof by noting that
  $(\Phi_v)_*=(\Phi_u)_*$ by Lemma~\ref{lemma:K-theory-natural}.
\end{proof}

\section{A duality isomorphism}\label{sec:dual}
Let $A$ and $B$ be separable $C^{*}$-algebras. We are going to show that when we fix an absorbing representation $\pi \colon A \to L(H_B)$ (the existence of such an absorbing representation is guaranteed by Theorem~\ref{thm:Thomsen-abs-exist}), the $K$-theory of $\CL(\pi)$ is canonically isomorphic to the $KK$-theory of the pair $(A,B)$.

We start with a technical lemma that will be used several times later.

\begin{lemma}\label{lem:qc-unit}
For any separable $C^*$-algebra $D\subset C_u(T,L(H_B))$ there is a positive contraction $x\in C_u(T,K(H_B))$ such that:
\begin{enumerate}[(a)]
\item $[x,d]\in C_0(T,K(H_B))$ for all $d\in D$, and
\item $(1-x)d\in C_0(T,K(H_B))$ for all $d\in D\cap C_u(T,K(H_B))$.
\end{enumerate}
\end{lemma}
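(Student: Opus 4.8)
**Plan for proving Lemma 2.13 (quasi-central approximate unit for $C^*$-subalgebras of $C_u(T,L(H_B))$).**

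The plan is to produce the desired positive contraction $x$ by averaging a quasi-central approximate unit of $K(H_B)$ over the parameter $t$, exploiting the Arveson/Akemann--Pedersen machinery together with the uniform continuity constraint. First I would fix a separable dense subset $\{d_n\}$ of $D$ together with a countable dense $\QQQ_+$-combination of products, and also fix a separable dense subset $\{c_m\}$ of the ideal $D\cap C_u(T,K(H_B))$; since $D$ is separable these choices are available. Evaluating at each rational $t$, one notes that the values $d_n(t), c_m(t)$ all lie in $L(H_B)$ with the $c_m(t)$ compact, and $K(H_B)$ is $\sigma$-unital, so there is a quasi-central approximate unit $(e_\lambda)$ for $K(H_B)$ relative to the separable set of all these values; the key classical fact (Akemann--Pedersen, or \cite[Ch.~5]{HigRoe:khomology}) is that such an approximate unit can be chosen to be \emph{quasi-central} for the whole relevant separable subalgebra simultaneously, and that $(1-e_\lambda)c\to 0$ for every $c$ in the ideal it generates.

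The main technical point — and the step I expect to be the genuine obstacle — is upgrading a pointwise-in-$t$ construction to a \emph{uniformly continuous} function $x\colon T\to K(H_B)$ that works for the whole algebra $D$ at once, not just on a countable set of evaluation points. The cleanest route is to treat $D$ itself as a separable $C^*$-algebra sitting inside the (non-separable) $C^*$-algebra $C_u(T,L(H_B))$, whose ideal $C_u(T,K(H_B))$ is $\sigma$-unical as a $C^*$-algebra in its own right (a strictly positive element is $t\mapsto h$ for a strictly positive $h\in K(H_B)$, which is indeed bounded and uniformly continuous). Then one invokes the abstract existence theorem: for any separable $C^*$-subalgebra $D$ of a $C^*$-algebra $M$ and any $\sigma$-unital ideal $J\trianglelefteq M$, there is a positive contraction $x\in J$ that is quasi-central for $D$ (i.e.\ $[x,d]\in$ the ideal generated, but here more: $xd-dx\in C_0(T,K(H_B))$ needs the stronger statement) and acts as a unit modulo $C_0$ on $D\cap J$. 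The subtlety is that the conclusion we want lands in the \emph{smaller} ideal $C_0(T,K(H_B))$, not just $C_u(T,K(H_B))$: so I would run the Akemann--Pedersen construction inside the quotient $C_u(T,K(H_B))/C_0(T,K(H_B))$, choosing an increasing approximate unit whose ``tail'' behaviour forces the commutators and the $(1-x)d$ terms to vanish at infinity.

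Concretely, I would build $x$ as an increasing limit (in the strict topology) of a sequence $x_1\le x_2\le\cdots$ of positive contractions in $C_u(T,K(H_B))$, chosen recursively so that: on the interval $[0,k]$, $x_k$ is $2^{-k}$-quasi-central for $d_1,\dots,d_k$ and satisfies $\|(1-x_k)c_j\|_{[0,k]}<2^{-k}$ for $j\le k$, while ensuring each $x_k$ is uniformly continuous and $x_k = x_{k-1}$ near where $x_{k-1}$ is already close to $1$ on the relevant finite set — a standard telescoping/reindexing argument (as in the proof that $C_0(T,K(H_B))$ has a quasi-central approximate unit for a separable subalgebra) then yields $x=\lim x_k$ with $[x,d]\in C_0(T,K(H_B))$ for all $d$ in the dense set, hence for all $d\in D$, and $(1-x)c\in C_0(T,K(H_B))$ for all $c\in D\cap C_u(T,K(H_B))$. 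The uniform-continuity bookkeeping across the $k\to k+1$ steps (patching on $[k,k+1]$ so that the global function stays in $C_u$, not merely $C_b$) is the part requiring care; everything else is the routine Akemann--Pedersen quasi-central approximate unit argument applied in the ideal $C_u(T,K(H_B))$ of $C_u(T,L(H_B))$ relative to the separable subalgebra $D$.
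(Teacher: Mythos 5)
Your plan correctly identifies the crux---turning a quasicentral approximate unit into a single uniformly continuous function of $t$---but the route you choose does not close that gap, and it is exactly where the argument fails as written. You take your building blocks $x_k$ from a quasicentral approximate unit of the ideal $C_u(T,K(H_B))$ of $C_u(T,L(H_B))$ relative to $D$; these are themselves functions of $t$, and nothing controls their moduli of uniform continuity as $k$ grows. Gluing or interpolating them over intervals $[k,k+1]$ (or taking an ``increasing limit'') therefore produces a bounded continuous function that need not lie in $C_u(T,K(H_B))$: the ``bookkeeping'' you defer is the missing idea, not a routine step. (Your recursion is also internally inconsistent as stated: you cannot demand the $2^{-k}$-estimates on all of $[0,k]$ while requiring $x_k$ to agree with $x_{k-1}$ on the earlier range; one only needs the estimates near $t=k$, but that has to be arranged explicitly.) Two auxiliary claims are moreover false, though not fatal: the constant function $t\mapsto h$ is not strictly positive in $C_u(T,K(H_B))$ (for $f$ running through projections onto an orthonormal sequence, $\|(1-\phi(h))f(t)\|$ does not tend to $0$ uniformly in $t$), and $C_u(T,K(H_B))$ is not $\sigma$-unital at all (the same kind of counterexample as for $\ell^\infty(\mathbb{N},K(H))$ applies); fortunately, quasicentral approximate units relative to a separable subalgebra do not require $\sigma$-unitality of the ideal, but your justification as written is flawed.

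The paper's proof avoids the uniform-continuity problem by a different reduction, which is the idea your proposal is missing: let $\dot D\subset L(H_B)$ be the separable $C^*$-algebra generated by all values $d(t)$, $d\in D$, $t\in T$, and take a positive contractive approximate unit $(x_n)$ of the ideal $\dot C=\dot D\cap K(H_B)$ that is quasicentral in $\dot D$, so that the building blocks are \emph{constant} compact operators rather than functions of $t$. Since each set $\{d_j(t):t\in[0,n+1]\}$ and $\{c_j(t):t\in[0,n+1]\}$ is norm compact, one may pass to a subsequence so that $\|[d_j(t),x_n]\|<\tfrac{1}{n+1}$ and $\|(1-x_n)c_j(t)\|<\tfrac{1}{n+1}$ for all $j\le n$ and $t\in[0,n+1]$, and then set $x(t)=(1-s)x_n+s\,x_{n+1}$ for $t=n+s\in[n,n+1)$. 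This $x$ is piecewise linear with uniformly bounded increments, hence uniformly continuous, and the estimates give (a) and (b) on the dense sets and therefore on all of $D$. To salvage your version you are essentially forced into this device (constant-in-$t$ building blocks plus compactness of the value sets over $[0,n+1]$); as proposed, the construction does not deliver $x\in C_u(T,K(H_B))$.
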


\begin{proof} Our arguments will in fact show that the statement holds true in the more general situation where $L(H_B)$ is replaced by a $C^{*}$-algebra $L$ and $K(H_B)$ is replaced by a two-sided closed ideal $I$ of $L$.
Let $\dot{D}$ denote the $C^*$-subalgebra of $L$ generated by all images $d(t)$ as $d$ ranges over $D$ and $t$ over $T$.  This is separable, and contains $\dot{C}=\dot{D}\cap I$ as an ideal.  Let $(x_n)_n$ be a positive contractive approximate unit for $\dot{C}$ which is quasi-central in $\dot{D}$.  Choose countable dense subsets $(d_k)_{k=1}^\infty$ and $(c_k)_{k=1}^\infty$ of $D$ and $D\cap C_u(T,I)$ respectively.  As for each $n$, the subsets $\bigcup_{k=1}^n\{d_k(t):t\in [0,n+1]\}$ and $\bigcup_{k=1}^n\{c_k(t):t\in [0,n+1]\}$ of $\dot{D}$ and $\dot{C}$ respectively are compact, we may assume on passing to a subsequence of $(x_n)$ that
\begin{enumerate}[(i)]
\item $\|[d_k(t),x_n]\|<\frac{1}{n+1}$ for all $1\leq k\leq n$ and all $t\in [0,n+1]$, and
\item $\|(1-x_n)c_k(t)\|<\frac{1}{n+1}$ for all $1\leq k \leq n$ and all $t\in [0,n+1]$.
\end{enumerate}
For $t\in [n,n+1)$, write $s=t-n$ and set $x(t)=(1-s)x_n+s x_{n+1}$; note that the function $x:t\mapsto x(t)$ is uniformly continuous.  Then from (i) and (ii) above we have
\begin{enumerate}[(i)]
\item $\|[d_k(t),x(t)]\|<\frac{1}{n+1}$ for all $1\leq k\leq n$ and all $t\in [n,n+1)$, and
\item $\|(1-x(t))c_k(t)\|<\frac{1}{n+1}$ for all $1\leq k \leq n$ and all $t\in [n,n+1)$.
\end{enumerate}
This implies that $x$ has the right properties.
\end{proof}

We have {obvious} inclusions $\DL(\pi)\subset \DT(\pi)$ and $\CL(\pi)\subset \CT(\pi)$ which induce
a $*$-homomorphism
\[\eta:  \DL(\pi)/\CL(\pi) \to\DT(\pi)/\CT(\pi).\]

\begin{proposition}\label{prop:1} For any separable $C^{*}$-algebras $A, B$ and any representation $\pi:A \to L(H_B)$, the map $\eta$ is a $*$-isomorphism.
\end{proposition}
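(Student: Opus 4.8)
The plan is to show $\eta$ is both injective and surjective by exploiting the technical Lemma~\ref{lem:qc-unit}.

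\medskip\noindent\textbf{Surjectivity.} First I would observe that $\eta$ is clearly well-defined since $\DL(\pi)\subset\DT(\pi)$ and $\CL(\pi)\subset\CT(\pi)$. To see it is surjective, take any $f\in\DT(\pi)$; I want to produce $g\in\DL(\pi)$ with $f-g\in\CT(\pi)$. The idea is to ``improve'' $f$ using a quasicentral approximate unit. Apply Lemma~\ref{lem:qc-unit} to a suitable separable subalgebra $D$ of $C_u(T,L(H_B))$ containing $f$ and enough of $\pi(A)f$, $f\pi(A)$ to control commutators; this yields a positive contraction $x\in C_u(T,K(H_B))$ with $[x,d]\in C_0(T,K(H_B))$ for $d\in D$ and $(1-x)d\in C_0$ for $d\in D\cap C_u(T,K(H_B))$. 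I would then try $g=(1-x)f(1-x)$ or more simply $g=(1-x^{1/2})f(1-x^{1/2})$; the point is that $[g,\pi(a)]$ should land in $C_0(T,K(H_B))$ (not merely $C_u$) because the commutator $[f,\pi(a)]\in C_u(T,K(H_B))$ gets multiplied on both sides by $(1-x)$, and $(1-x)k\in C_0(T,K(H_B))$ for any $k\in C_u(T,K(H_B))$ by property (b), while the extra terms coming from $[(1-x),\pi(a)]=-[x,\pi(a)]$ are already in $C_0$ by property (a) — here one needs $\pi(a)f$ or $f\pi(a)$ to be controlled, which requires care about exactly which elements to put into $D$. Simultaneously, $f-g = f - (1-x^{1/2})f(1-x^{1/2})$ is a sum of terms each containing a factor $x^{1/2}\in C_u(T,K(H_B))$, so $\pi(a)(f-g)\in C_u(T,K(H_B))$, giving $f-g\in\CT(\pi)$. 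Thus $\eta(g+\CL(\pi)) = f+\CT(\pi)$.

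\medskip\noindent\textbf{Injectivity.} For injectivity, suppose $f\in\DL(\pi)$ and $f\in\CT(\pi)$, i.e. $[f,\pi(a)]\in C_0(T,K(H_B))$ and $\pi(a)f\in C_u(T,K(H_B))$ for all $a$; I must show $f\in\CL(\pi)$, which after the definitions means exactly $\pi(a)f\in C_u(T,K(H_B))$ together with $[f,\pi(a)]\in C_0(T,K(H_B))$ — but wait, these are precisely the hypotheses, so $f\in\CL(\pi)$ and $\eta$ is injective essentially by unwinding definitions: $\DL(\pi)\cap\CT(\pi)$ consists of $f$ with $[f,\pi(a)]\in C_0$ (from $\DL$) and $\pi(a)f\in C_u(T,K(H_B))$ (from $\CT$), which is the definition of $\CL(\pi)$. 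So injectivity should be immediate; the content of the proposition is entirely in surjectivity.

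\medskip\noindent\textbf{Main obstacle.} The delicate point is surjectivity, specifically the bookkeeping for which elements must be placed in the separable algebra $D$ fed to Lemma~\ref{lem:qc-unit} so that both (a) the improved commutators land in $C_0$ and (b) the correction $f-g$ lands in $\CT(\pi)$, all while only assuming $[f,\pi(a)]\in C_u(T,K(H_B))$ rather than something stronger. One should include $f$ itself, the commutators $[f,\pi(a_i)]$ for a dense sequence $(a_i)$ in $A$, and enough of $\pi(A)$ and products $\pi(A)f$; then verify that the various cross-terms arising from expanding $(1-x^{1/2})f(1-x^{1/2})$ and its commutators with $\pi(a)$ genuinely vanish at infinity using properties (a), (b) and the density of $(a_i)$ together with a standard $\varepsilon/3$-argument. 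I expect this verification to be routine once $D$ is chosen correctly, but choosing $D$ correctly is where the actual thought lies.
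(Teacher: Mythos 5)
Your proposal is correct and follows essentially the same route as the paper: injectivity is just the definitional identity $\CL(\pi)=\DL(\pi)\cap\CT(\pi)$, and surjectivity comes from a cut-down of $f\in\DT(\pi)$ by the quasicentral element $x$ of Lemma~\ref{lem:qc-unit}. The paper streamlines your surjectivity step by taking $D=C^*(\pi(A),f)$ (so $[f,\pi(a)]\in D\cap C_u(T,K(H_B))$ automatically, and the bookkeeping you call the main obstacle disappears) and by using the one-sided cut-down $\tilde f=(1-x)f$, for which $[\tilde f,\pi(a)]=[\pi(a),x]f+(1-x)[f,\pi(a)]\in C_0(T,K(H_B))$ by properties (a) and (b) and $\tilde f-f=-xf\in\CT(\pi)$; the only slip to note is your passing claim that $(1-x)k\in C_0(T,K(H_B))$ for \emph{any} $k\in C_u(T,K(H_B))$ --- property (b) only gives this for $k\in D\cap C_u(T,K(H_B))$, which suffices here.
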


\begin{proof}
It is clear from the definitions that $\CL(\pi)=\DL(\pi)\cap \CT(\pi)$ and hence  $\eta$ is {injective}.
It remains to prove that $\eta$ is surjective. It  suffices to show that for any
$f\in \DT(\pi)$  there is  $\tilde{f}\in  \DL(\pi)$ such that $\tilde{f}-f\in \CT(\pi)$.
Let $f\in \DT(\pi)$  be given.

Let $D$ be the $C^*$-subalgebra of $C_u(T,L(H_B))$ generated by $\pi(A)$ (embedded as constant functions) and $f$, and let $x$ be as in Lemma~\ref{lem:qc-unit}.  With this choice of $x$ (that depends on $f$) we define
 $\tilde{f}=(1-x)f.$
 Note that $\tilde{f}=f - x f\in \DT(\pi)$ since $f,x\in \DT(\pi)$, and $\tilde{f}-f=-x f\in C_u(T,K(H_B))$ since
  $x\in C_u(T, K(H_B))$.
In particular it follows that $\tilde{f}-f \in \CT(\pi)$.

It remains to verify that $\tilde{f}\in \DL(\pi)$.  This follows as for any $a\in A$,
\[[\tilde{f},\pi(a)]=[(1-x)f,\pi(a)]=[\pi(a),x]f+(1-x)[f,\pi(a)]. \qedhere\]

\end{proof}

An adaptation of the arguments from the paper  \cite{Qiao-Roe}
of   Qiao and Roe gives:
\begin{proposition}\label{prop:2} Let $A,B$ be separable $C^{*}$-algebras and let $\pi:A \to L(H_B)$ be an absorbing representation. Then
\begin{itemize}
\item[(a)] $K_*(\DL(\pi))=0$ and hence the boundary map
\newline
\noindent$\partial:K_*(\DL(\pi)/\CL(\pi))\to K_{*+1}(\CL(\pi))$
is an isomorphism. 
\item[(b)] The evaluation map at $t=0$ induces an isomorphism. 
\newline
\noindent $e_*:K_*(\DT(\pi)/\CT(\pi))\to K_*(\D(\pi)/\C(\pi))$.
\end{itemize}
\end{proposition}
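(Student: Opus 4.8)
The plan is to prove each item by reducing it to the vanishing of a $K$-group and then running an Eilenberg swindle.

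\emph{Item (a).} The six-term sequence of $0\to\CL(\pi)\to\DL(\pi)\to\DL(\pi)/\CL(\pi)\to 0$ shows that $\partial$ is an isomorphism as soon as $K_*(\DL(\pi))=0$, so I would prove the latter by a swindle. For $s\ge 0$ set $(\tau_sf)(t)=f(t+s)$; these are $*$-endomorphisms of $\DL(\pi)$ with $\tau_0=\id$, and since $s\mapsto\tau_sf$ is norm-continuous uniformly in $t$ each $\tau_s$ is homotopic to $\id$. The crucial point is that
\[\psi\colon \DL(\pi)\to\DL(\pi^\infty),\qquad \psi(f)(t)=\bigoplus_{n=0}^\infty (\tau_nf)(t)=\bigoplus_{n=0}^\infty f(t+n),\]
is a well-defined $*$-homomorphism: $\psi(f)$ is plainly bounded and uniformly continuous, while $[\psi(f)(t),\pi^\infty(a)]=\bigoplus_n[f(t+n),\pi(a)]$ lies in $K(H_B^\infty)$ and tends to $0$ as $t\to\infty$ — and this is exactly where the hypothesis $[f,\pi(a)]\in C_0(T,K(H_B))$ is used, since it forces $\|[f(t+n),\pi(a)]\|\to 0$ in $n$, and a direct sum of compact operators whose norms tend to $0$ is again compact. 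Decomposing $H_B^\infty=H_B\oplus H_B^\infty$ one has $\psi=\iota+\sigma\circ\psi\circ\tau_1$, where $\iota\colon\DL(\pi)\hookrightarrow\DL(\pi^\infty)$ is the inclusion as the first coordinate and $\sigma=\mathrm{Ad}(V)$ is the one-coordinate shift $h\mapsto 0\oplus h$; these two maps have orthogonal ranges, so on $K$-theory $\psi_*=\iota_*+\sigma_*\psi_*(\tau_1)_*=\iota_*+\psi_*$, using that $V$ is an isometry in $M(\DL(\pi^\infty))$ (whence $\sigma_*=\id$) and that $(\tau_1)_*=\id$. Therefore $\iota_*=0$; but $\iota_*$ is an isomorphism by Corollary~\ref{cor:inclusion} (the case $j=5$), so $K_*(\DL(\pi))=0$.

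\emph{Item (b).} Including $\D(\pi)$ into $\DT(\pi)$ as constant functions carries $\C(\pi)$ into $\CT(\pi)$ and so descends to a $*$-homomorphism $\D(\pi)/\C(\pi)\to\DT(\pi)/\CT(\pi)$ that is a section of the evaluation map; hence $e_*$ is a split surjection and it suffices to show $K_*(I)=0$, where $I=\ker(\DT(\pi)/\CT(\pi)\to\D(\pi)/\C(\pi))=\{[f]: f(0)\in\C(\pi)\}$. Since the constant function with value $f(0)$ lies in $\CT(\pi)$ whenever $f(0)\in\C(\pi)$, the substitution $f\mapsto f-f(0)$ identifies $I$ with $\DT^0(\pi)/\CT^0(\pi)=A_3(\pi)/A_4(\pi)$. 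I would then run a second swindle, this time with the \emph{backward} shift $(\tau_sf)(t)=f(\max(t-s,0))$: these are $*$-endomorphisms of both $\DT^0(\pi)$ and $\CT^0(\pi)$, homotopic to $\id$, and
\[\theta\colon\DT^0(\pi)\to\DT^0(\pi^\infty),\qquad \theta(f)(t)=\bigoplus_{n=0}^\infty f(\max(t-n,0)),\]
is a well-defined $*$-homomorphism carrying $\CT^0(\pi)$ into $\CT^0(\pi^\infty)$; the infinite sum is now legitimate because $f(0)=0$, so for each fixed $t$ all but finitely many summands vanish identically and the commutator with $\pi^\infty(a)$ is a finite direct sum of compacts. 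Exactly as in (a), $\theta=\iota+\sigma\circ\theta\circ\tau_1$ with orthogonal ranges yields $\bar\theta_*=\bar\iota_*+\bar\theta_*$ and hence $\bar\iota_*=0$; and Corollary~\ref{cor:inclusion} (cases $j=3,4$) together with the five lemma applied to $0\to\CT^0(\pi)\to\DT^0(\pi)\to\DT^0(\pi)/\CT^0(\pi)\to 0$ shows $\bar\iota_*$ is an isomorphism. Thus $K_*(I)=0$ and $e_*$ is an isomorphism.

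\emph{Main difficulty.} The heart of the proof is to find the right swindle in each case, and in particular to recognise that the two parts require genuinely different constructions: in $\DL(\pi)$ one shifts \emph{towards $+\infty$} and uses the decay of the commutators $[f,\pi(a)]$ there, whereas in $\DT^0(\pi)$, where these commutators are merely bounded, one shifts \emph{away from $0$} and uses the vanishing of $f$ at $0$. It is worth noting why the obvious alternative fails: contracting a family onto its value at $0$ via $f\mapsto f(s\,\cdot\,)$ is not norm-continuous in $s$, since elements of $\DL(\pi)$ and $\DT(\pi)$ may oscillate as $t\to\infty$ — so one really is forced to use a swindle rather than an honest homotopy equivalence. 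The remaining points — checking that $\psi$ and $\theta$ land in the stated algebras, that the corner endomorphism $\mathrm{Ad}(V)$ acts as the identity on $K$-theory, and the bookkeeping with Corollary~\ref{cor:inclusion} and the five lemma — are routine.
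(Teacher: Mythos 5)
Your proposal is correct and takes essentially the same route as the paper's proof: part (a) is the forward-shift Eilenberg swindle into $\DL(\pi^\infty)$, hinging on exactly the observation that $[f,\pi(a)]\in C_0(T,K(H_B))$ makes the infinite direct sum of commutators compact, and part (b) is the backward-shift swindle, hinging on the vanishing at $0$ to leave only finitely many nonzero components, combined with the split exact sequence over $\D(\pi)/\C(\pi)$ and Corollary~\ref{cor:inclusion}. The only cosmetic difference is that in (b) you run the swindle directly on the quotient $\DT^0(\pi)/\CT^0(\pi)$ (using the five lemma to see $\bar\iota_*$ is an isomorphism), whereas the paper shows $K_*(\DT^0(\pi))=0$ and $K_*(\CT^0(\pi))=0$ separately and then passes to the quotient.
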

\begin{proof}
Fix an ample representation $\pi$ of $A$. One  verifies that if $f\in \DL(\pi)$, then the formula
\[F(t):=(f(t),f(t+1),...,f(t+n),...)\]
defines an element $F\in \DL(\pi^\infty)$.
Indeed,
\[[F(t),\pi(a)]=([f(t),\pi(a)],[f(t+1),\pi(a)],...,[f(t+n),\pi(a)],...)\]
and  each entry belongs to $C_0(T,K(H_B))$ and is bounded by $\|[f,\pi(a)]\|$.
This shows that $[F,\pi(a)]\in C_u(T,K(H_B^{\infty}))$. Since $[f,\pi(a)]\in C_0(T,K(H_B))$,
it follows immediately that in fact $[F,\pi(a)]\in C_0(T,K(H_B^{\infty}))$.

With these remarks, the proof of (a) goes just like the proof of Proposition 3.5 from \cite{Qiao-Roe}.
Indeed, define  $*$-homomorphisms  $\alpha_i:\DL(\pi)\to \DL(\pi^\infty)$, $i=1,2,3,4$ by
\[\alpha_1(f)=(f(t),0,0,...), \]
\[\alpha_2(f)=(0,f(t+1),f(t+2),...), \]
\[\alpha_3(f)=(0,f(t),f(t+1),...) \]
\[\alpha_4(f)=(f(t),f(t+1),f(t+2),...) .\]
It is clear that $\alpha_1+\alpha_2=\alpha_4$.
The isometry $v\in L(H_B^\infty)$ defined by  $v(h_0,h_1,h_2,...)=(0,h_0,h_1,h_2,...)$ commutes with $\pi^\infty(A)$ and hence $v\in \DL(\pi^\infty)$.
 Moreover
$\alpha_4(a)=v\alpha_3(a)v^*$  and hence $(\alpha_4)_*=(\alpha_3)_*$ by \cite[Lemma 4.6.2]{HigRoe:khomology}.  Using uniform continuity, one shows that $\alpha_3$ is homotopic to $\alpha_2$, via the homotopy $f(t)\mapsto (0,f(t+s),f(t+s+1),...)$, $0\leq s \leq 1$.
We deduce that
\[(\alpha_1)_*+(\alpha_2)_*=(\alpha_1+\alpha_2)_*=(\alpha_4)_*=(\alpha_3)_*=(\alpha_2)_*\]
and hence $(\alpha_1)_*=0$. This concludes the proof of (a), since $(\alpha_1)_*$ is an isomorphism  by Corollary~\ref{cor:inclusion}.

(b) One follows the proof of Proposition 3.6 from  \cite{Qiao-Roe} to show that both $K_*(\DT^0(\pi))=0$ and $K_*(\CT^0(\pi))=0$. The desired conclusion will then follow in view of the
 split exact sequence:
\[
\xymatrix{
0 \ar[r] &\DT^0(\pi)/\CT^0(\pi) \ar[r] & \DT(\pi) /\CT(\pi)\ar[r] &\D(\pi)/\C(\pi) \ar[r] & 0.
}
\]
Any $f\in \DT^0(\pi)$ can be extended by $0$ to an element of $C_u(\mathbb{R},L(H_B))$.
With this convention,
define four maps $\beta_i:\DT^0(\pi)\to \DT^0(\pi^\infty)$, $i=1,2,3,4$ by
\[\beta_1(f)=(f(t),0,0,...), \]
\[\beta_2(f)=(0,f(t-1),f(t-2),...), \]
\[\beta_3(f)=(0,f(t),f(t-1),...) \]
\[\beta_4(f)=(f(t),f(t-1),f(t-2),...) .\]
This definition requires that one verifies that if $f \in \DT^0(\pi)$,
then \[F'(t):=(f(t),f(t-1),...,f(t-n),...)\]
defines an element of $ \DT^0(\pi^\infty)$. This is clearly the case, since if $f$ is uniformly continuous, then so is $F'$  and moreover,
just as argued in \cite{Qiao-Roe},  for each $t$ in a fixed bounded interval only finitely many components of $F'(t)$  are non-zero, and hence $[F'(t),\pi^\infty(a)]\in K(H_B^\infty)$  if $[f(t),\pi(a)]\in K(H_B)$ for all $t\in T$.
Note that $(\beta_4)_*=(\beta_3)_*$ since
$\beta_4(a)=v\beta_3(a)v^*$ where $v\in \DT(\pi^\infty)$ is the same isometry as in part (a).
Using uniform continuity, one observes that $\beta_3$ is homotopic to $\beta_2$, via the homotopy $f(t)\mapsto (0,f(t-s),f(t-s-1),...)$, $0\leq s \leq 1$.
We deduce that
\[(\beta_1)_*+(\beta_2)_*=(\beta_1+\beta_2)_*=(\beta_4)_*=(\beta_3)_*=(\beta_2)_*\]
and hence $(\beta_1)_*=0$. This shows that $K_*(\DT^0(\pi))=0$, since $(\beta_1)_*$ is an isomorphism  by Corollary~\ref{cor:inclusion}. The proof for the vanishing of $K_*(\CT^0(\pi))$ is entirely similar.
Indeed, with the same notation as above, one observes that
 if $f \in \CT^0(\pi)$ then $F'\in \CT^0(\pi^\infty)$. Moreover, the four maps $\beta_i:\DT^0(\pi)\to \DT^0(\pi^\infty)$ restrict to maps  $\beta'_i:\CT^0(\pi)\to \CT^0(\pi^\infty)$ with $\beta'_3$ homotopic to $\beta'_2$ and $(\beta'_1)_*$ is an isomorphism  by Corollary~\ref{cor:inclusion}.
\end{proof}
\begin{theorem}\label{thm:1} Let $A,B$ be separable $C^{*}$-algebras and let $\pi:A \to L(H_B)$ be an absorbing representation. There are canonical isomorphisms of groups
\[\alpha:KK_i(A,B)\stackrel{\cong}\longrightarrow K_i(\CL(\pi)),\quad i=0,1.\]
\end{theorem}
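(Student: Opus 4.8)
The plan is to identify $K_i(\CL(\pi))$ with $KK_i(A,B)$ by chaining together the structural results already established, together with classical Paschke duality. The key observation is that the localization picture sits on top of the ``static'' dual algebra picture: by a theorem of Paschke (as reworked by Higson--Roe and Thomsen for the Hilbert-module setting), for an absorbing representation $\pi:A\to L(H_B)$ one has canonical isomorphisms $K_i(\D(\pi)/\C(\pi))\cong KK_{i+1}(A,B)$ (this is exactly the statement that the generalized Paschke dual computes $KK$-theory with a dimension shift; see \cite{Thomsen}). I would take this as the input on the $KK$ side.

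First I would assemble the isomorphisms on the $K$-theory side. From Proposition~\ref{prop:2}(a), since $K_*(\DL(\pi))=0$, the six-term exact sequence associated to the ideal $\CL(\pi)\trianglelefteq\DL(\pi)$ collapses to give the boundary isomorphism $\partial:K_*(\DL(\pi)/\CL(\pi))\stackrel{\cong}\to K_{*+1}(\CL(\pi))$. Next, by Proposition~\ref{prop:1}, the natural map $\eta:\DL(\pi)/\CL(\pi)\to\DT(\pi)/\CT(\pi)$ is a $*$-isomorphism, hence induces an isomorphism on $K$-theory. Then Proposition~\ref{prop:2}(b) gives that evaluation at $t=0$ induces an isomorphism $e_*:K_*(\DT(\pi)/\CT(\pi))\stackrel{\cong}\to K_*(\D(\pi)/\C(\pi))$. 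Composing these three isomorphisms yields $K_i(\CL(\pi))\cong K_{i-1}(\DL(\pi)/\CL(\pi))\cong K_{i-1}(\DT(\pi)/\CT(\pi))\cong K_{i-1}(\D(\pi)/\C(\pi))$, and then Paschke duality $K_{i-1}(\D(\pi)/\C(\pi))\cong KK_i(A,B)$ closes the loop (being careful with the mod-$2$ indexing so the dimension shift from $\partial$ cancels the shift in Paschke duality).

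The composite $\alpha$ is then defined to be the inverse of this chain. Canonicity — i.e.\ independence of the choice of absorbing representation $\pi$ — follows from Corollary~\ref{cor:indep} together with Lemma~\ref{lemma:K-theory-natural}: any two absorbing representations are intertwined by a uniformly continuous unitary, and the induced maps $\Phi_v$ are compatible with all the structure maps ($\partial$, $\eta$, $e_*$) used above, while Lemma~\ref{lemma:K-theory-natural} guarantees the induced map on $K$-theory does not depend on the choice of intertwiner. One should also check naturality in $A$ and $B$ if that is claimed, but for the bare isomorphism statement the above suffices.

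The main obstacle I expect is not in this formal chaining but in making sure the Paschke-type identification $K_i(\D(\pi)/\C(\pi))\cong KK_{i+1}(A,B)$ is available in precisely the generality needed here: arbitrary separable $A$ and $B$, with $\pi$ merely absorbing (not necessarily unital, $A$ not necessarily nuclear), acting on the Hilbert module $H_B$ rather than a Hilbert space. This is where the reference to Thomsen's work \cite{Thomsen} is essential, and one must verify that the $\D/\C$ algebras defined here coincide (up to the identifications of Section~\ref{sec:1}, which already show independence of the absorbing representation) with the dual algebras appearing in Thomsen's formulation of Paschke duality. A secondary, more bookkeeping-level point is tracking the degree shifts carefully: $\partial$ shifts degree by $1$, Paschke duality shifts by $1$, and these must be seen to compose to the identity shift so that $KK_i$ matches $K_i(\CL(\pi))$ rather than $K_{i+1}(\CL(\pi))$.
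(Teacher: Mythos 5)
Your proposal is correct and follows essentially the same route as the paper: the paper's proof of Theorem~\ref{thm:1} is exactly the composite $\partial\circ\eta_*^{-1}\circ\iota_*\circ P$, where $P$ is the Paschke duality isomorphism $KK_i(A,B)\cong K_{i+1}(\D(\pi)/\C(\pi))$ in Thomsen's generality \cite{Thomsen}, $\iota_*=e_*^{-1}$ is the isomorphism from Proposition~\ref{prop:2}(b), $\eta_*$ comes from Proposition~\ref{prop:1}, and $\partial$ is the boundary isomorphism from Proposition~\ref{prop:2}(a), with the two degree shifts cancelling just as you describe.
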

\begin{proof}
Consider the diagram
\[
\xymatrix{
{KK_{i}(A,B)}\ar[r]^-{P} &K_{i+1}(\D(\pi)/\C(\pi))\ar[r]^{\iota_*} & K_{i+1}(\DT(\pi)/\CT(\pi))\ar[d]^{{\eta^{-1}_*}} \\
 & K_{i}(\CL(\pi) )& K_{i+1}(\DL(\pi)/\CL(\pi))\ar[l]_-{\partial} }
\]
where $P$ is the Paschke duality isomorphism,  see \cite{Paschke}, \cite[Remarque~2.8]{Skandalis:K-nuclear}, \cite[Theorem~3.2]{Thomsen}, and $\iota$ is the canonical inclusion.
The maps $\partial$ and ${\iota_*}=e^{-1}_*$ are isomorphisms by Proposition~\ref{prop:2} and $\eta_*$ is an isomorphism by Proposition~\ref{prop:1}.
\end{proof}
As a corollary we obtain the following duality theorem, mentioned in the introduction.  Recall from the introduction that $\CL(A)$ stands for $\CL(\pi)$, where $\pi$ is ample (and thus absorbing, by Theorem~\ref{thm:Voiculescu-classic}), and $A$ is identified with $\pi(A)$.
\begin{theorem}\label{thm:cor} For any separable $C^{*}$-algebra $A$ there are canonical isomorphisms of groups
\(K^i(A)\cong K_i(\CL(A)),\quad i=0,1.\)
\qed
\end{theorem}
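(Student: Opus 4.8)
The plan is to deduce Theorem~\ref{thm:cor} as the special case $B=\CCC$ of Theorem~\ref{thm:1}, which has already done essentially all the work. First I would invoke Theorem~\ref{thm:Voiculescu-classic} (Voiculescu): an ample representation $\pi$ of the separable $C^{*}$-algebra $A$ on a separable infinite-dimensional Hilbert space $H$ is absorbing. Taking $B=\CCC$, so that $H_B\cong H$ and $L(H_B)=L(H)$, $K(H_B)=K(H)$, Theorem~\ref{thm:1} then supplies canonical isomorphisms $KK_i(A,\CCC)\cong K_i(\CL(\pi))$ for $i=0,1$.

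Next I would identify both sides with the classical objects. On the left, $KK_i(A,\CCC)$ is by definition (or by the standard identification, see \cite{HigRoe:khomology}) the $K$-homology group $K^i(A)$; this is just unwinding Kasparov's definition in the case of scalar coefficients. On the right, by the discussion preceding the statement, $\CL(\pi)$ for an ample $\pi$ is precisely the algebra denoted $\CL(A)$ after identifying $A$ with $\pi(A)\subset L(H)$, and Corollary~\ref{cor:indep} guarantees that its isomorphism class — hence its $K$-theory — does not depend on the choice of ample (thus absorbing) representation. So $K_i(\CL(\pi))=K_i(\CL(A))$ canonically.

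Combining these two identifications with the isomorphism $\alpha$ of Theorem~\ref{thm:1} yields $K^i(A)\cong K_i(\CL(A))$ for $i=0,1$, and the composite is canonical because each of its constituent maps is. There is essentially no obstacle here: the only point requiring a word of care is the independence of the choice of ample representation, which is exactly Corollary~\ref{cor:indep} together with Lemma~\ref{lemma:K-theory-natural} ensuring the induced $K$-theory isomorphisms are canonical; and the matching of $KK_i(A,\CCC)$ with $K^i(A)$, which is a definitional identification. Hence the proof is a one-line specialization, and I would simply write: apply Theorem~\ref{thm:1} with $B=\CCC$ and an ample representation $\pi$, using Theorem~\ref{thm:Voiculescu-classic} to see $\pi$ is absorbing and Corollary~\ref{cor:indep} to see the resulting group is independent of the choice of $\pi$. $\qed$
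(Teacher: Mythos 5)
Your proposal is correct and follows exactly the paper's route: Theorem~\ref{thm:cor} is deduced by specializing Theorem~\ref{thm:1} to $B=\CCC$ with an ample representation $\pi$, using Theorem~\ref{thm:Voiculescu-classic} to see that ample implies absorbing, identifying $KK_i(A,\CCC)$ with $K^i(A)$, and noting (via Corollary~\ref{cor:indep}) that $\CL(A)$ is independent of the choice of ample representation. Nothing is missing.
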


\section{An inverse map}\label{sec:inv}
Let $\alpha: KK_i(A,B)\stackrel{\cong}\longrightarrow K_i(\CL(\pi))$ be the isomorphism of Theorem~\ref{thm:1}.
Recall that $K(H_B)\cong B \otimes K(H)$.
Consider the $*$-homomorphism
\[\mathbf{\Phi}: \DL(\pi) \otimes_{\max} A\to \frac{C_u(T,  L(H_B))}{C_0(T,  K(H_B))}\]
defined by $\mathbf{\Phi}(f \otimes a)=f \pi(a) $
and its restriction to $\CL(\pi) \otimes_{\max} A $
\[{\boldsymbol\varphi}: \CL(\pi) \otimes_{\max} A \to \frac{C_u(T,  K(H_B))}{C_0(T,  K(H_B))}.\]

We want $\boldsymbol\varphi$ to define a class in $E$-theory that we can take products with, but have to be a little careful due to the non-separability of the $C^{*}$-algebra $\CL(\pi) \otimes_{\max} A$.   Just as in the case of the $KK$-groups \cite{Skandalis:K-nuclear},
if $C$ is any $C^{*}$-algebra and $B$ is a non-separable $C^{*}$-algebra one defines $E_{sep}(B,C)=\varprojlim_{\,B_1} E(B_1,C)$, with $B_1 \subset B$ and $B_1$ separable. Moreover if $D$ is separable, then $E(D,B)=\varinjlim_{\,B_1} E(D,B_1)$, with $B_1\subset B$ and  $B_1$ separable. With these adjustments, one has a well-defined product
\[E(D,B)\times E_{sep}(B,C)\to E(D,C).\]
Moreover, it is clear that $[[\boldsymbol\varphi]]$ defines an element of the group  $E_{sep}(\CL(\pi) \otimes_{\max} A ,B)$.

Recall the isomorphism $K_i(\CL(\pi))\cong E_i(\mathbb{C}, \CL(\pi))$.
We use the product
\[E_i(\mathbb{C}, \CL(\pi))\times E_{sep}(\CL(\pi) \otimes_{\max} A ,B) \to E_i(A, B)\]
to define a map $\beta: K_i(\CL(\pi)) \to E_i(A, B)$ by $\beta (z) =  [[\boldsymbol\varphi]]\circ (z \otimes \id_A)$.

The map $\beta$ is an inverse of $\alpha$ in the following sense.
\begin{theorem}\label{inverse the} The composition $\beta \circ \alpha$ coincides with the natural map

\noindent $KK_i(A,B)\to E_i(A,B)$, $i=0,1$.
\end{theorem}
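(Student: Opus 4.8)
The plan is to unwind both sides and show they agree by tracking a single $KK$-element through an explicit chain of isomorphisms and products. Start with $x\in KK_i(A,B)$ represented by an absorbing Cuntz pair, or equivalently by a cycle whose data lives in the dual algebra $\D(\pi)/\C(\pi)$ via the Paschke duality map $P$. The first step is to make the isomorphism $\alpha$ completely explicit on representatives: trace $x$ through $P$ to get a class in $K_{i+1}(\D(\pi)/\C(\pi))$, lift it (via $e_*^{-1}=\iota_*$) to $K_{i+1}(\DT(\pi)/\CT(\pi))$, transport it through $\eta_*^{-1}$ into $K_{i+1}(\DL(\pi)/\CL(\pi))$, and apply the index/boundary map $\partial$ to land in $K_i(\CL(\pi))$. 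The key point is that after these steps, $\alpha(x)$ is represented by a projection-difference (or unitary, in the odd case) built from a uniformly continuous path $t\mapsto p_t$ of operators in $\D(\pi)$ whose commutators with $\pi(A)$ vanish as $t\to\infty$, and which at $t=0$ recovers (a stabilization of) the original Paschke representative of $x$.

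The second step is to compute $\beta(\alpha(x)) = [[\boldsymbol\varphi]]\circ(\alpha(x)\otimes\id_A)$ directly from this representative. Since $\boldsymbol\varphi(f\otimes a)=f\pi(a)$, composing with $\alpha(x)\otimes\id_A$ amounts to pairing the path $t\mapsto p_t$ with $\pi(a)$, i.e.\ forming $t\mapsto p_t\pi(a)$ modulo $C_0(T,K(H_B))$. This is exactly an asymptotic morphism $A\to K(H_B)\sim B\otimes K$, and one recognizes it as a standard model for the image of $x$ under $KK_i(A,B)\to E_i(A,B)$: the $E$-theory class of $x$ is represented by the family $\{\varphi_t\}$ where $\varphi_t(a)$ is obtained by "conjugating" a Cuntz-pair representative along a path, and the localization-algebra path $p_t$ is precisely such a path by construction. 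The third step is therefore a bookkeeping comparison: check that the asymptotic morphism produced by $\boldsymbol\varphi\circ(\alpha(x)\otimes\id)$ agrees, up to the standard equivalences in the $E$-theory picture (homotopy, and the $\varprojlim/\varinjlim$ over separable subalgebras of $B$ that was set up before the theorem), with the canonical asymptotic morphism representing the natural image of $x$ in $E_i(A,B)$.

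I expect the main obstacle to be the first step: pinning down a genuinely explicit, functorial representative for $\alpha(x)$ that survives all four maps in the diagram of Theorem~\ref{thm:1}. In particular the boundary map $\partial$ for the ideal $\CL(\pi)\trianglelefteq\DL(\pi)$ and the inverse $\eta_*^{-1}$ coming from Proposition~\ref{prop:1} (which used the quasicentral approximate unit $x$ from Lemma~\ref{lem:qc-unit}) are only described at the level of $K$-theory, so one must either produce a cycle-level lift or argue naturally. The natural route is to observe that $\beta$ is additive and defined purely via a $*$-homomorphism $\boldsymbol\varphi$ and the Kasparov/$E$-theory product, and that the natural transformation $KK_i(A,B)\to E_i(A,B)$ is characterized by its compatibility with products and with the identification $K_i(\CL(\pi))\cong E_i(\CCC,\CL(\pi))$; so it suffices to verify the identity on a generating set, or after composing with $\alpha^{-1}=\beta$'s would-be inverse, reducing to a statement about the composition $\boldsymbol\varphi\circ(\text{Paschke cycle})$ at $t=0$. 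Once the $t=0$ endpoint is identified with the Busby-type invariant underlying Paschke duality, the asymptotic (large $t$) behaviour is controlled by the defining property $[f,\pi(a)]\in C_0(T,K(H_B))$, which is exactly what makes $\boldsymbol\varphi$ land in $C_u(T,K)/C_0(T,K)$ and defines the asymptotic morphism; compatibility with the $E$-theory product is then formal.
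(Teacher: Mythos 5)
Your overall route matches the paper's first proof (unwind $\alpha$ through Paschke duality, $\iota_*$, $\eta_*^{-1}$ and $\partial$, then compute $\beta$ via $\boldsymbol\varphi$ and compare with the Connes--Higson picture), but the two places where the real work lies are asserted rather than carried out. First, you never actually produce the asymptotic morphism representing $\beta(\alpha(y))$. The class $\alpha(y)$ does not come for free with a ``path $t\mapsto p_t$ of operators in $\D(\pi)$'': to land in $\DL(\pi)$ at all one must replace the Paschke representative $e\in\D(\pi)$ by $e_L=(1-x)e$ using the quasicentral element $x$ of Lemma~\ref{lem:qc-unit}, and the boundary map $\partial$ is then implemented $E$-theoretically by $\delta_t(g\otimes \dot m)\sim g(v_t)m$ for a quasicentral approximate unit $(v_t)$ of $M\cap\CL(\pi)$ in a separable $M\ni e,x$; the resulting composite is $\mu_t(g\otimes a)\sim g(v_t(s(t)))\bigl(1-x(s(t))\bigr)e\pi(a)e$, defined on $SA$ (in the odd case a suspension is unavoidable, so ``pairing $p_t$ with $\pi(a)$'' is not even of the right shape). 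Second, identifying this with the natural map is not a matter of ``recognizing a standard model'': the Connes--Higson representative is $\gamma_t(g\otimes a)\sim g(u_t)e\pi(a)e$ for an approximate unit $(u_t)$ of $K(H_B)$ quasicentral in $e\pi(A)e+K(H_B)$, and to connect the two one must choose $(u_t)$ compatibly with $x$ (so that $(1-u_t)x(s(t))\to 0$, again via Lemma~\ref{lem:qc-unit}) and then exhibit the explicit homotopy $w_t^{(r)}=(1-r)v_t(s(t))+r\,u_t$, after first replacing $e\pi(a)$ by $e\pi(a)e$ using $e\pi(a)=e\pi(a)e+e[e,\pi(a)]$ and the fact that $1-x(s(t))$ asymptotically annihilates compacts. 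None of this is ``formal''; it is the content of the proof.

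The fallback you propose does not close this gap. There is no usable generating set of $KK_i(A,B)$ on which to check the identity, and ``composing with $\alpha^{-1}$'' presupposes what is to be proved. The naturality you would need is naturality of the $E$-theoretic boundary map with respect to genuine (non-$*$-homomorphic) asymptotic morphisms such as $\varphi_t$; the paper explicitly notes that the closest statement in the literature (Guentner) is not general enough, so this route requires proving that naturality, which again reduces to the same quasicentral-approximate-unit homotopy. Your instinct that evaluation at $t=0$ recovers the Busby invariant underlying Paschke duality is correct and is exactly how the paper's second, sketched argument identifies the right-hand side; but that only handles the $t=0$ end, while the asymptotic comparison as $t\to\infty$ still has to be made by the explicit construction above.
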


\begin{proof} We will give the proof for the odd case $i=1$ and leave the even case for the reader.
Recall that the $E$-theory group $E_1(A,B)$ of Connes and Higson \cite{Con-Hig:etheory} is isomorphic to
$[[SA, K(H_B)]]$ by a desuspension result from \cite{DadLor:unsusp}.

For two continuous functions $f,g:T \to L(H_B)$ we will write $f(s)\sim g(s)$ (or  $f(t)\sim g(t)$) if $f-g\in C_0(T,K(H_B))$.
Let $\{\varphi_s: \CL(\pi) \otimes_{\max} A  \to  K(H_B))\}_{s\in T}$ be an asymptotic homomorphism
representing $\boldsymbol\varphi$. More precisely take $\varphi$ to be a set-theoretic lifting of $\boldsymbol\varphi$.  This means that $\varphi_s( f\otimes a)\sim f(s)\pi(a)$.

The composition $\beta \circ \alpha: KK_1(A,B)\to E_1(A,B)$ is computed as follows.
Let $y\in KK_1(A,B)$ and let $z=Py\in K_0(\D(\pi)/\C(\pi))$ be its image under the Paschke duality isomorphism $P:KK_1(A,B)\to K_0(\D(\pi)/\C(\pi))$.
Let $z$ be represented by a self-adjoint element $e\in \D(\pi)\subset \DT(\pi)$ whose image in
$\D(\pi)/\C(\pi)$ is an idempotent $\dot{e}$. We identify $\D(\pi)$ with the $C^{*}$-subalgebra of constant functions in $\DT(\pi)$. Choose an element $x\in C_u(T,K(H_B))$ as in Lemma~\ref{lem:qc-unit} with respect to the (separable) $C^{*}$-subalgebra $D$ of $C_u(T,L(H_B))$ generated by $\pi(A)$, $e$, and $K(H_B)$.  Therefore both $[x,\pi(a)]$ and $ (1-x)[e,\pi(a)]$ belong to $C_0(T,K(H_B))$ for all $a\in A$, and moreover $(1-x)e\in \DL(\pi)$ as
$$
[(1-x)e,\pi(a)]=[1-x,\pi(a)]e+(1-x)[e,\pi(a)]\in C_0(T,K(H_B))
$$
for all $a\in A$.   Let $e_L=(1-x)e$ and let $\dot{e}_L$ be its image in $ \DL(\pi)/\CL(\pi)$. Under the isomorphism $ \DL(\pi)/\CL(\pi) \cong  \DT(\pi)/\CT(\pi)$ of Proposition \ref{prop:1} we see that $\dot{e}_L$ is just the image of $e \in \DT(\pi)$ in the quotient, which is an idempotent since $\dot{e}$ is so.
It is then clear that $\eta^{-1}_*\iota_*(z)=[\dot{e}_L]$.

Define a $*$-homomorphism $\ell:\mathbb{C}\to \DL(\pi)/\CL(\pi)$ by $\ell(1)=\dot{e}_L$ and set $S=C_0(0,1)$.
Then $(\beta\circ \alpha )(y)$ is represented by the composition of the asymptotic homomorphisms from the following diagram.
\begin{equation}\label{eqn:am}
\xymatrix{ S \otimes \mathbb{C}\otimes A \ar[r]^-{1 \otimes \ell \otimes 1} & S \otimes \DL(\pi)/\CL(\pi) \otimes A\ar[r]^-{ \delta_t \otimes 1}  &\CL(\pi)\otimes A \ar[r]^{\varphi_s}& K(H_B),
}
\end{equation}
where here and throughout the rest of the proof the tensor products are maximal ones, and the map labelled $\delta_t$ is defined by taking the product with a canonical element $\delta$ of $E_{1,sep}(\DL(\pi)/\CL(\pi),\CL(\pi))$ associated to the extension
\[ 0 \to \CL(\pi)\to \DL(\pi) \to \DL(\pi)/\CL(\pi) \to 0\]
that we now discuss.
Fixing a separable $C^{*}$-subalgebra $\dot{M}$ of $\DL(\pi)/\CL(\pi)$, the image of $\delta$ in $E_1(\dot{M},\CL(\pi))$ is defined as follows.  Choose a separable $C^{*}$-subalgebra $M$ of $\DL(\pi)$ that surjects onto $\dot{M}$, and for each $\dot{m}\in \dot{M}$ choose a lift $m\in M$.  Let $(v_t)_{t\in T}$ be a positive, contractive, and continuous approximate unit for $M\cap \CL(\pi)$ which is quasicentral in $M$.  Then for $g\in S=C_0(0,1)$, $\delta$ is characterized by stipulating that $\delta_t(g\otimes \dot{m})$ satisfies
$$
\delta_t(g\otimes \dot{m})\sim g(v_t)m
$$
(the choices of $(v_t)$ and the various lifts do not matter up to homotopy).  In our case, to compute the composition we need, let $M$ be a separable $C^*$-subalgebra of $\DL(\pi)$ containing $e$ and $x$, and let $(v_t)$ be an approximate unit for $M\cap C_L(\pi)$ that is quasicentral in $M$.

On the level of elements, we can now concretely describe the composition in line \eqref{eqn:am} as follows.
If $g\in S=C_0(0,1)$ and $a\in A$, then under the asymptotic morphism
$\{\mu_t: SA \to K(H_B)\}_{t}$ defined by diagram \eqref{eqn:am},
elementary tensors $g\otimes a$ are mapped as follows
\begin{equation}\label{eq:nn} g\otimes a \xmapsto{\quad} g\otimes \dot{e}_L \otimes a \xmapsto{\,\,\delta_t \,\,} g(v_t)(1-x)e \otimes a\xmapsto{\,\,\varphi_{s(t)} \,\,} g(v_t(s(t)))\big(1-x (s(t) ) \big)e \pi(a)\end{equation}
for any positive map $t\mapsto s(t)$  which increases to $ \infty$ sufficiently fast.
Since the map $t\mapsto x(t)$ is an approximate unit of $K(H_B)$, $(1-x)y \in C_0(T,K(H_B))$ for all $y\in K(H_B)$.
In particular it follows that
 $\big(1-x (s(t) ) \big)e[e,\pi(a)]\sim 0$ since $[e,\pi(a)]\in K(H_B)$.
 Since $e\pi(a)=e\pi(a)e+e[e,\pi(a)]$, it follows from \eqref{eq:nn} that
\begin{equation}\label{eqn:am1}
\mu_t(g\otimes a)\sim g(v_t(s(t)))\big(1-x (s(t) ) \big)e \pi(a)e.
\end{equation}

On the other hand, the natural map $KK_1(A,B) \to E_1(A,B)$, maps $y$ to $[[\gamma_t]]$, where $\{\gamma_t: S\otimes A \to K(H_B)\}_t$ is described in \cite{Con-Hig:etheory}
as follows. Consider the extension:
\[0 \to K(H_B) \to e\pi(A) e+K(H_B) \to A \to 0.\]
 Let $(u_t)_{t\in T}$ be a contractive, positive, and continuous approximate unit of $K(H_B)$ which is quasicentral in $e\pi(A) e+K(H_B)$. Then
 \[\gamma_t(g \otimes a)\sim g(u_t)e\pi(a)e.\]
Applying Lemma~\ref{lem:qc-unit} (this time with $D$ the $C^{*}$-subalgebra of $C_u(T,L(H_B))$ generated by $e$, $\pi(A)$, $K(H_B)$, and $t\mapsto x(s(t))$), we can choose $(u_t)_{t}$ such that
 $\lim_{t\to \infty} (1-u_t) x(s(t))=0$. Since the $C^{*}$-algebra $C_0[0,1)$ is generated by the function $f(\theta)= 1-\theta$, it follows that $\lim_{t\to \infty} g(u_t) x(s(t))=0$ for all $g\in C_0[0,1)$, and in particular for all $g\in C_0(0,1)$.

 Our goal now is to verify that $(\mu_t)_t$ is homotopic to $(\gamma_t)_t$.
 Due to the choice of $(u_t)_{t}$ and the comments above, we have that
 \begin{equation}\label{eqn:am2}
 \gamma_t(g\otimes a)\sim g(u_t)e\pi(a)e \sim g(u_t)\big(1-x (s(t) ) \big)e\pi(a)e,
 \end{equation}
 for all $a\in A$ and $g\in C_0(0,1)$.   Finally, define $w_t^{(r)}=(1-r)\,v_t(s(t))+r\,u_t$, $0\leq r\leq  1$.  As
 \[
 \left[ g \big( w_t^{(r)} \big), \big(1-x (s(t) ) \big) e \pi(a) e \right] \to 0 \text{ as } t\to\infty
 \]
 for all $r\in [0,1]$ and $a\in A$, there is an asymptotic morphism  $H_t: SA \to C[0,1]\otimes K(H_B)$ defined by the condition
 \[H_t^{(r)}(g\otimes a)\sim g \big(w_t^{(r)}\big) \big(1-x (s(t) ) \big) e \pi(a) e.\]  This gives the desired homotopy joining $(\mu_t)_t$ with $(\gamma_t)_t$.
\end{proof}

As suggested by the referee, we finish this section by sketching another proof which is maybe a little less self-contained, but more conceptual.  The proof below is analogous to the approach used by Qiao and Roe to establish \cite[Proposition 4.3]{Qiao-Roe}.   The basic idea in their approach is to apply naturality of the connecting map in $E$-theory for the diagram of strictly commutative asymptotic morphisms
$$
\xymatrix@C=0.59cm{ 0 \ar[r] &  \CL(\pi)\otimes_{\max} A \ar[r] \ar[d]^-{\varphi_t} & \DL(\pi)\otimes_{\max}A \ar[r] \ar[d]^-{{\phi_t}} & (\DL(\pi)/\CL(\pi))\otimes_{\max} A \ar[r] \ar[d]^-{\bar{\phi}_t} & 0 \\
0 \ar[r] & K(H_B) \ar[r] & L(H_B) \ar[r] & L(H_B)/K(H_B) \ar[r] & 0 ~,}
$$
where $\phi_t$ and $\varphi_t$ represent the asymptotic morphisms induced by the $*$-homomorphisms $\mathbf{\Phi}$ and ${\boldsymbol\varphi}$ from the beginning of this section. The family $\bar{\phi}_t$ is the quotient family induced by $\phi_t$, and consists of $*$-homomorphisms.
Naturality of the boundary map in $E$-theory in this case amounts to the equality
\begin{equation}\label{nat-bound}[[\varphi_t]]\circ [[\delta_t \otimes \text{id}_A]]=[[\gamma_t]]\circ [[\bar{\phi}_t]], \end{equation}
where $\delta_t$ is the boundary map for the top sequence of the diagram before tensoring with $A$ and $\gamma_t$ is the boundary map for the bottom sequence. See \cite[Lemme 10]{Con-Hig:etheory} for the definition of the boundary maps associated to extensions (here and elsewhere below one should use limits to deal with the non-separable algebras involved in the way discussed earlier in this section). The naturality property of the  boundary map
  with respect to general asymptotic morphisms that was discussed in \cite[Thm. 5.3]{Guentner} seems to be the closest statement in the literature to the equality in line \eqref{nat-bound}, but it is nonetheless not sufficiently general to justify the equality. However, one can combine the arguments from
  the  second part of the proof of Theorem~\ref{inverse the} with those from \cite{Guentner} to verify naturality in full generality and in particular to justify \eqref{nat-bound}.

 Now \eqref{nat-bound} allows us to conceptualize the proof of Theorem ~\ref{inverse the}. Let $y\in KK_{i}(A,B)$ and let $z=Py\in K_{i+1}(\D(\pi)/\C(\pi))$ be its image under the Paschke duality isomorphism $P:KK_{i}(A,B)\to K_{i+1}(\D(\pi)/\C(\pi))$.
 Consider $\eta^{-1}_*\iota_*(z) \in K_{i+1}(\DL(\pi)/\CL(\pi))\cong E_{i+1}(\mathbb{C},\DL(\pi)/\CL(\pi))$, where the maps $\iota_*$ and $\eta_*$ are  isomorphisms as in the proof of Theorem \ref{thm:1}.
  We may  view  $\eta^{-1}_*\iota_*(z)\otimes[[\text{id}_{A}]]$ as an element of $E_{i+1}\left(A,\DL(\pi)/\CL(\pi)\otimes_{\max}A\right)$.
From \eqref{nat-bound} we obtain that
\begin{equation}\label{nat-bound-z}[[\varphi_t]]\circ [[\delta_t \otimes \text{id}_A]]\circ (\eta^{-1}_*\iota_*(z)\otimes[[\text{id}_{A}]])=[[\gamma_t]]\circ [[\bar{\phi}_t]]\circ (\eta^{-1}_*\iota_*(z)\otimes[[\text{id}_{A}]]). \end{equation}
 The left hand side of \eqref{nat-bound-z} represents the element  $(\beta\circ \alpha) (y)$ of $E_{i}(A,B)$ by the very definition of $\alpha$ and $\beta$.

In order to identify the right hand side of \eqref{nat-bound-z}, it is useful to note that  each individual map $\bar{\phi}_t$ is a $*$-homomorphism given by $\kappa \circ (\operatorname{ev}_t \otimes \text{id}_A )$, where
\[
	\operatorname{ev}_t \colon \DL(\pi)/\CL(\pi) \to \D(\pi)/\C(\pi)
\]
is the evaluation map at $t$ and
\[
	\kappa \colon \left(\D(\pi)/\C(\pi)\right)\otimes_{\max}A\to L(H_B)/K(H_B) , ~ [b] \otimes a \mapsto [b \cdot \pi(a)]
\]
is the ``multiplication" $*$-homomorphism. Thus the asymptotic morphism $\{\bar{\phi}_t\}$ is homotopic to the constant asymptotic morphism given by $\bar{\phi}_0$, which is equal to $\kappa \circ (\operatorname{ev}_0 \otimes \text{id}_A )$. Hence the right hand side of \eqref{nat-bound-z} is equal to
\[
	[[\gamma_t]]\circ [[\kappa]] \circ ((\operatorname{ev}_0)_* \eta^{-1}_*\iota_*(z)\otimes[[\text{id}_{A}]]) .
\]
It follows from the following commutative diagram of $*$-homomorphisms
\[
\xymatrix{
	\D(\pi) / \C(\pi) \ar[r]^{\operatorname{id}} \ar[d]^{\iota} & \D(\pi) / \C(\pi) \\
	\DT(\pi) / \CT(\pi) \ar[ur]^{\operatorname{ev}_{0}} & \DL(\pi) / \CL(\pi) \ar[l]^{\eta} \ar[u]^{\operatorname{ev}_{0}}
}
\]
that $(\operatorname{ev}_0)_* \eta^{-1}_*\iota_*(z) = z$. This allows us to simplify the right hand side of \eqref{nat-bound-z} further to
\[
	[[\gamma_t]]\circ [[\kappa]] \circ (z \otimes[[\text{id}_{A}]])
\]
where $z$ is viewed as an element in $E_{i+1}(\mathbb{C},\D(\pi)/\C(\pi))$. This can be seen to be equal to the image of $y$ under the natural map $KK_{i}(A,B)\to E_{i}(A,B)$.

Indeed, focusing on the odd case, where $y\in KK_1(A,B)$ and $z=Py\in K_0(\D(\pi)/\C(\pi))$, we may choose $e\in \D(\pi)$ as in the first part of the proof of Theorem~\ref{inverse the}, such that $z=[\dot{e}]\in K_0(\D(\pi)/\C(\pi))$. Then the $*$-homomorphism $a \in A \mapsto [ e \cdot \pi(-) ] \in L(H_B)/K(H_B) $, which represents $[[\kappa]] \circ (z \otimes[[\text{id}_{A}]])$, is the Busby invariant of the extension corresponding to $e\in \D(\pi)$. Hence its composition with the asymptotic morphism $\{\gamma_t\} \colon L(H_B)/K(H_B) \to K(H_B)$ represents the image of $y$ under the natural map $KK_1(A,B)\to E_1(A,B)$.

\bibliographystyle{abbrv}

\end{document}